\documentclass[12pt]{article}
\usepackage{amssymb,amsmath,amsthm,amsfonts,enumerate, bbm, mathdots}

\usepackage{latexsym}
\usepackage{amscd}
\usepackage{stmaryrd}
\usepackage{color}
\usepackage[all]{xypic}
\usepackage{epsfig}
\usepackage{graphics}
\usepackage{ifthen}
\usepackage{varioref}
\usepackage{rotating}
\usepackage{extarrows}
\usepackage{cite}
\usepackage{mathrsfs}

\numberwithin{equation}{section}

\theoremstyle{plain}
\newtheorem{thm}{Theorem}[section]
\newtheorem{cor}[thm]{Corollary}
\newtheorem{lem}[thm]{Lemma}

\newmuskip\pFqmuskip
\newcommand*\pFq[6][8]{%
  \begingroup 
  \pFqmuskip=#1mu\relax
  \mathchardef\normalcomma=\mathcode`,
  \mathcode`\,=\string"8000
  \begingroup\lccode`\~=`\,
  \lowercase{\endgroup\let~}\pFqcomma
  {}_{#2}F_{#3}{\left(\genfrac..{0pt}{}{#4}{#5};#6\right)}%
  \endgroup
}
\newcommand{\pFqcomma}{{\normalcomma}\mskip\pFqmuskip}

\makeatletter
\newenvironment{proofof}[1]{\par
  \pushQED{\qed}%
  \normalfont \topsep6\p@\@plus6\p@\relax
  \trivlist
  \item[\hskip\labelsep
        \bfseries
    Proof of #1\@addpunct{.}]\ignorespaces
}{%
  \popQED\endtrivlist\@endpefalse
}
\makeatother

\definecolor{dg}{rgb}{0.0625,0.64,0.0625}
\usepackage[
            pdfstartview=FitH,
            CJKbookmarks=true,
            bookmarksnumbered=true,
            bookmarksopen=true,
            colorlinks,
            pdfborder=001,
            linkcolor=blue,
            anchorcolor=green,
            citecolor=red
            ]{hyperref}

\definecolor{deepmaroon}{RGB}{100,0,20}     
\definecolor{deepnavy}{RGB}{0,30,80}        
\definecolor{deepforest}{RGB}{0,60,30}      
\definecolor{deeppurple}{RGB}{128,0,255}     
\definecolor{deepteal}{RGB}{0,70,70}        
\definecolor{deepolive}{RGB}{60,65,30}      
\definecolor{darkredheavy}{RGB}{120,0,0}     
\definecolor{lightgreen}{RGB}{0,255,255}    
\definecolor{darkgreenheavy}{RGB}{0,80,0}    
\definecolor{darkpurpleheavy}{RGB}{90,0,120} 
\definecolor{darktealheavy}{RGB}{0,90,90}    
\definecolor{darkorangeheavy}{RGB}{160,60,0} 
\definecolor{matcha}{RGB}{130,190,140}
\definecolor{emeraldDeep}{RGB}{0,115,90}
\definecolor{grassBright}{RGB}{20,180,60}

\newfont{\scyr}{wncyr10 scaled 550}

\def\proof{\noindent {\bf Proof.\;}}

\def\wt{\operatorname{wt}}
\def\dep{\operatorname{dep}}
\def\height{\operatorname{ht}}

\allowdisplaybreaks

\begin{document}

\title{Weighted sum formulas for finite multiple mixed values}
\date{~}
\author{{Zhonghua Li${}^{a,}$\thanks{Email: zhonghua\_li@tongji.edu.cn}\quad and\quad{Zhenlu Wang${}^{b,}$\thanks{Email: zhenluwang@tzc.edu.cn}}}\\[1mm]\small a. School of Mathematical Sciences,\\ \small Key Laboratory of Intelligent Computing and Applications (Tongji University), \\ \small Ministry of Education, \small Tongji University, Shanghai 200092, China\\
\small b. School of Artificial Intelligence, \\ \small Taizhou University, Taizhou 318000, Zhejiang, China}


\maketitle

\begin{abstract}
In this paper, we employ the iterated integral expression of multiple polylogarithms to establish a weighted sum formula for finite multiple mixed values. As applications, we derive a one-parameter weighted sum formula for finite multiple $T$-values of arbitrary even depth, as well as several relations for depth-two, height-zero and height-one finite multiple mixed values.
\end{abstract}

{\small
{\bf Keywords} finite multiple zeta values; finite multiple mixed values; weighted sum formula

{\bf 2020 Mathematics Subject Classification} 11M32; 05A19
}


\section{Introduction}
\subsection{Multiple zeta values and level-two variants}
A finite sequence of positive integers is called an index. An index $\boldsymbol{k}=(k_1,\ldots,k_r)$ is called admissible if $k_r\geq2$. The weight, depth and height of $\boldsymbol{k}$ are defined by $\wt(\boldsymbol{k}):=k_1+\cdots+k_r$, $\dep(\boldsymbol{k}):=r$, and $\height(\boldsymbol{k}):=|\{i|k_i\geq2\}|$ respectively.

For any admissible index $\boldsymbol{k}=(k_1,\ldots,k_r)$, Hoffman\cite{Hoffman1992} and Zagier\cite{Zagier1994} independently defined the multiple zeta value (MZV) $\zeta(\boldsymbol{k})$ by 
\begin{align*}
  \zeta(\boldsymbol{k}):=\sum\limits_{0<m_1<\cdots<m_r}\frac{1}{m_1^{k_1}\cdots m_r^{k_r}}.
\end{align*}

These values have featured prominently in both number theory and physics, attracting considerable attention and interest over the past three decades. One of the main points of interest in this area is to find as many relations among MZVs as possible and further explore the structure of the $\mathbb{Q}$-vector space spanned by MZVs. There are many linear relations among MZVs. A well-known example is the sum formula \cite{Hoffman1992}
\begin{align*}
  \sum\limits_{k_1+\cdots+k_r=k\atop k_r\geq2}\zeta(k_1,\ldots,k_r)=\zeta(k),\quad (k\geq 2).
\end{align*}
Ohno and Zudilin \cite{OhnoZudilin} proved the following weighted sum formula for double zeta values:
\begin{align*}
  \sum\limits_{k_1+k_2=k\atop k_2\geq2}2^{k_2}\zeta(k_1,k_2)=(k+1)\zeta(k),\quad (k\geq 3).
\end{align*}
 Guo and Xie \cite{GuoXie2009} generalized the above formula to arbitrary depth. Many other types of weighted sum formulas among MZVs exist; see Zhao's monograph \cite{Zhao2016} for further details.

In recent years, a number of variants of MZVs have been introduced and studied. Among them, a family known as level-two MZVs arises from restricting the summation in the definition of MZVs to a prescribed parity pattern. Prominent examples include Hoffman's multiple $t$-values (MtVs)\cite{Hoffman2019}, Kaneko-Tsumura's multiple $T$-values (MTVs)\cite{Kaneko-Tsumura2020}, and Xu-Zhao's multiple $S$-values (MSVs)\cite{XuZhao2022}. For any admissible index $\boldsymbol{k}=(k_1,\ldots,k_r)$, we give their respective definitions as follows:
\begin{align*}
&t(\boldsymbol{k})=t(k_1,\dots,k_r):=2^r\sum\limits_{0<m_1<\cdots<m_r\atop \forall m_i:odd}\frac{1}{m_1^{k_1}\cdots m_r^{k_r}},\\
&T(\boldsymbol{k})=T(k_1,\dots,k_r):=2^r\sum\limits_{0<m_1<\cdots<m_r\atop m_i\equiv i \pmod2}\frac{1}{m_1^{k_1}\cdots m_r^{k_r}},\\
&S(\boldsymbol{k})=S(k_1,\dots,k_r):=2^r\sum\limits_{0<m_1<\cdots<m_r\atop m_i\equiv i-1 \pmod2}\frac{1}{m_1^{k_1}\cdots m_r^{k_r}}.
\end{align*}
Note that the definition of multiple $t$-values adopted in this paper coincides with that in \cite{Hoffman2019}, with an extra multiplicative factor of $2^r$.

To reformulate the definitions of these level-two MZVs, Xu and Zhao\cite{XuZhao2022} introduced the multiple mixed values (MMVs, or multiple M-values). For any admissible index $\boldsymbol{k}=(k_1,\ldots,k_r)$ and any finite sequence $\boldsymbol{\varepsilon}=(\varepsilon_1,\ldots,\varepsilon_r)\in\{\pm 1\}^r$, the multiple mixed value $M(\boldsymbol{k};\boldsymbol{\varepsilon})$ is defined by 
\begin{align*}
M(\boldsymbol{k};\boldsymbol{\varepsilon}):=\sum\limits_{0<m_1<\dots< m_r}\frac{(1+\varepsilon_1(-1)^{m_1})\cdots(1+\varepsilon_r(-1)^{m_r})}{m_1^{k_1}\cdots m_r^{k_r}}=2^r\sum_{\substack{0 < m_1 < \dots < m_r \\ 2 \mid m_j \text{ if } \varepsilon_j = 1 \\ 2 \nmid m_j \text{ if } \varepsilon_j = -1}}\frac{1}{m_1^{k_1} \cdots m_r^{k_r}}.
\end{align*}

By applying Chen's theory of iterated integrals, one can define a shuffle algebra that reflects the multiplication rule of two MZVs in terms of these integrals. For any differential 1-forms $f_1(u)du, \dots, f_n(u)du$, the iterated integral is defined by 
\begin{align*}
\int_0^z f_1(u)du \cdots f_n(u)du = \int\limits_{0<u_1<\cdots<u_n<z} f_1(u_1)du_1 \cdots f_n(u_n)du_n.
\end{align*}
To investigate the shuffle structure of MMVs, we define the following 1-forms: 
\begin{align*}
\omega_0(u) = \frac{du}{u}, \quad 
\omega_1(u) = \frac{2u du}{1-u^2},\quad
\omega_{-1}(u) = \frac{2du}{1-u^2}. 
\end{align*}
As established in \cite{XuZhao2022}, MMVs admit the following integral representations:
\begin{align*}
M(k_1, \dots, k_r; \varepsilon_1, \varepsilon_2, \dots, \varepsilon_r) = \int_0^1 \omega_{\varepsilon_1}\omega_0^{k_1-1} \omega_{\varepsilon_1\varepsilon_2 }\omega_0^{k_2-1}\cdots\omega_{\varepsilon_{r-1}\varepsilon_r }\omega_0^{k_r-1}
\end{align*}
and 
\begin{align*}
\int_0^1 \omega_{\varepsilon_1}\omega_0^{k_1-1} \omega_{\varepsilon_2 }\omega_0^{k_2-1}\cdots\omega_{\varepsilon_r}\omega_0^{k_r-1}= M(k_1, \dots, k_r; \varepsilon_1, \varepsilon_1 \varepsilon_2, \dots, \varepsilon_1 \cdots \varepsilon_r). 
\end{align*}

\subsection{Finite multiple zeta values and  level-two variants}

Let $\mathcal{P}$ be the set of primes. Set
\begin{align*}
\mathcal{A} := \prod_{p \in \mathcal{P}} (\mathbb{Z}/p\mathbb{Z}) \Big/ \bigoplus_{p \in \mathcal{P}} (\mathbb{Z}/p\mathbb{Z}).
\end{align*}
For any index $\boldsymbol{k}=(k_1,\ldots,k_r)$,  the finite multiple zeta value (FMZV) $\zeta_{\mathcal{A}}(\boldsymbol{k})$ was introduced by Kaneko and Zagier (see \cite{Kaneko2019,KanekoZagier-prep}) via the definition
\begin{align*}
\zeta_{\mathcal{A}}(\boldsymbol{k}):=\left( \sum_{0<m_1<\dots<m_r<p} \frac{1}{m_1^{k_1}\cdots m_r^{k_r}} \pmod{p} \right)_{p \in \mathcal{P}} \in \mathcal{A}.
\end{align*}
Kaneko and Zagier proposed a central conjecture on finite multiple zeta values, which predicts a deep connection between finite and classical multiple zeta values. Similarly, there are many linear relations among finite multiple zeta values. It is known that FMZVs satisfy the reversal formula
\begin{align*}
\zeta_\mathcal{A}(k_1,\ldots,k_r)=(-1)^{k_1+\cdots+k_r}\zeta_\mathcal{A}(k_r,\ldots,k_1).
\end{align*} 
Saito and Wakabayashi \cite{SaitoWakabayashi} proved a general identity, from which the following analogue of the classical sum formula can be derived as a special case:
\begin{align*}
  \sum\limits_{k_1+\cdots+k_r=k\atop k_r\geq2}\zeta_\mathcal{A}(k_1,\ldots,k_r)=\left(1+(-1)^r\binom{k-1}{r-1}\right)Z(k),
\end{align*}
where for an integer $k\geq2$, $Z(k)\in\mathcal{A}$ is defined by 
\begin{align*}
  Z(k)_{(p)}:=\frac{B_{p-k}}{k}\pmod p\quad\text{for}\;p>k,\quad (B_{p-k}: \text{Bernoulli number}).
\end{align*}
Using the iterated integral expression for multiple polylogarithms, Kamano \cite{Kamano2018} derived some weighted sum formulas for FMZVs.

In this paper, we focus on the level-two finite multiple zeta values. For any index $\boldsymbol{k}=(k_1,\ldots,k_r)$, any finite sequence $\boldsymbol{\varepsilon}=(\varepsilon_1,\ldots,\varepsilon_r)\in\{\pm 1\}^r$, and any positive integer $n$, Zhao \cite{Zhao2024b} defined the $n$-th partial multiple mixed sum $M_n(\boldsymbol{k};\boldsymbol{\varepsilon})$ by 
\begin{align*}
M_n(\boldsymbol{k};\boldsymbol{\varepsilon}):&=\sum\limits_{0<m_1<\dots<m_r<n}\frac{(1+\varepsilon_1(-1)^{m_1})\cdots(1+\varepsilon_r(-1)^{m_r})}{m_1^{k_1}\cdots m_r^{k_r}}\\&=2^r\sum_{\substack{0< m_1<\dots<m_r<n \\ 2 \mid m_j \text{ if } \varepsilon_j = 1 \\ 2 \nmid m_j \text{ if } \varepsilon_j = -1}}\frac{1}{m_1^{k_1} \cdots m_r^{k_r}}
\end{align*}
and the finite multiple mixed value (FMMV) $M_\mathcal{A}(\boldsymbol{k};\boldsymbol{\varepsilon})$ by 
\begin{align*}
M_\mathcal{A}(\boldsymbol{k};\boldsymbol{\varepsilon}):=\Bigl(M_p(\boldsymbol{k};\boldsymbol{\varepsilon}) \pmod{p}\Bigr)_{p\in\mathcal{P}}\in\mathcal{A}.
\end{align*}

Finite multiple mixed values serve as a common generalization of the following four level-two variants of finite multiple zeta values. For any index $\boldsymbol{k}=(k_1,\ldots,k_r)$, the finite multiple $t$-value (FMtV), the finite multiple $E$-value (FMEV), the finite multiple $T$-value (FMTV), and the finite multiple $S$-value (FMSV) are defined by 
\begin{align*}
&t_\mathcal{A}(\boldsymbol{k})=t_\mathcal{A}(k_1,\dots,k_r):=M_\mathcal{A}(k_1,\dots,k_r;\{-1\}^r),\\
&E_\mathcal{A}(\boldsymbol{k})=E_\mathcal{A}(k_1,\dots,k_r):=M_\mathcal{A}(k_1,\dots,k_r;\{1\}^r),\\
&T_\mathcal{A}(\boldsymbol{k})=T_\mathcal{A}(k_1,\dots,k_r):=M_\mathcal{A}(k_1,\dots,k_r;-1,1,\dots,(-1)^{r-1},(-1)^r),\\
&S_\mathcal{A}(\boldsymbol{k})=S_\mathcal{A}(k_1,\dots,k_r):=M_\mathcal{A}(k_1,\dots,k_r;1,-1,\dots,(-1)^{r-2},(-1)^{r-1}),
\end{align*}
respectively, where $\{a\}^n$ denotes the string $a,\ldots,a$ with $n$ repetitions. We remark that our definitions of these level-two variants match those in \cite{Zhao2024b} up to a multiplicative factor of $2^r$. Furthermore, FMEVs satisfy $E_\mathcal{A}(\boldsymbol{k})=2^{\dep(\boldsymbol{k})-\wt(\boldsymbol{k})}\zeta_\mathcal{A}^{(2)}(\boldsymbol{k})$, where $\zeta_\mathcal{A}^{(2)}(\boldsymbol{k})$ was introduced by Kaneko et al. \cite{KanekoMurakamiYoshihara2023}. They established a parity result and several sum formulas, and also conjectured a weighted sum formula for indices whose components belong to $\{1,2\}$. This conjecture was recently proved by the present authors together with Zhang \cite{LiWangZhang2026}. Zhao \cite{Zhao2024a,Zhao2024b} studied these level-two variants of FMZVs, establishing their stuffle, reversal, and linear shuffle relations, which can be used to compute generating sets in weights one and two and to analyze the dimensions of the various subspaces generated by these values. 

For any tuple $\boldsymbol{d}$ and any real number $c$, let $\overleftarrow{\boldsymbol{d}}$ denote the reversal of $\boldsymbol{d}$, and let $c\boldsymbol{d}$ denote the tuple formed by multiplying each component of $\boldsymbol{d}$ by $c$. In particular, $-\boldsymbol{d}$ denotes the tuple obtained by negating all components of $\boldsymbol{d}$. Similarly, the FMMVs admit the following reversal relation \cite{Zhao2024b}:
  \begin{align}
    M_\mathcal{A}(\overleftarrow{\boldsymbol{k}};-\overleftarrow{\boldsymbol{\varepsilon}})=(-1)^{\wt(\boldsymbol{k})}M_\mathcal{A}(\boldsymbol{k};\boldsymbol{\varepsilon}).\label{Eq:reversal relation}
  \end{align}

In this paper, we provide a weighted sum formula for finite multiple mixed values.  The following is our main theorem, where the notation $\sum_{|\boldsymbol{k}|_{r}=N}$ indicates that the sum is taken over all indices $\boldsymbol{k}=(k_1,\ldots,k_r)$ satisfying $k_1+\cdots k_r=N$. 

\begin{thm}\label{Thm:main result}
Let $a,b$ be non-negative integers, and let $\delta_1,\delta_2,\tau\in\{\pm 1\}$. For all integers $m,n$ satisfying $0\leq m\leq a$ and $0\leq n\leq b$, we have
  \begin{align*}
  &\sum\limits_{0\leq i\leq a\atop 0\leq j\leq b}\binom{i}{m}\binom{j}{n}\\
  &\qquad\qquad\times\sum\limits_{\substack{|\boldsymbol{k}|_{a-i+1}=a+b-i-j+1\\|\boldsymbol{l}|_{i+1}=i+j+1}}M_\mathcal{A}(\boldsymbol{k},\boldsymbol{l};\delta_1,\delta_1\tau,\ldots,\delta_1\tau^{a-i},\delta_1\delta_2\tau^{a-i},\ldots,\delta_1\delta_2\tau^{a-1},\delta_1\delta_2\tau^{a})\\
  &+\sum\limits_{0\leq i\leq a\atop 0\leq j\leq b}\binom{i}{a-m}\binom{j}{b-n}\\
  &\qquad\qquad\times\sum\limits_{\substack{|\boldsymbol{k}|_{a-i+1}=a+b-i-j+1\\|\boldsymbol{l}|_{i+1}=i+j+1}}M_\mathcal{A}(\boldsymbol{k},\boldsymbol{l};\delta_2,\delta_2\tau,\ldots,\delta_2\tau^{a-i},\delta_1\delta_2\tau^{a-i},\ldots,\delta_1\delta_2\tau^{a-1},\delta_1\delta_2\tau^{a})\\
  &=\sum\limits_{\substack{|\boldsymbol{k}|_{a-m+1}=a+b-m-n+1\\|\boldsymbol{l}|_{m+1}=m+n+1}}(-1)^{m+n+1}\\
  &\qquad\qquad\qquad\qquad\qquad\times M_\mathcal{A}(\boldsymbol{k},\boldsymbol{l};\delta_1,\delta_1\tau,\ldots,\delta_1\tau^{a-m},-\delta_2\tau^{m},-\delta_2\tau^{m-1},\ldots,-\delta_2\tau,-\delta_2).
  \end{align*}
\end{thm}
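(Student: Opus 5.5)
We follow Kamano's approach for weighted sum formulas of FMZVs, adapted to level two. The key tool is the iterated integral expression of the truncated multiple polylogarithm. For $z$ formal, define the level-two polylogarithm
\[
\mathrm{Li}(\boldsymbol{k};\boldsymbol{\varepsilon};z)=\sum_{0<m_1<\cdots<m_r}\frac{(1+\varepsilon_1(-1)^{m_1})\cdots(1+\varepsilon_r(-1)^{m_r})z^{m_r}}{m_1^{k_1}\cdots m_r^{k_r}},
\]
which equals $\int_0^z \omega_{\varepsilon_1}\omega_0^{k_1-1}\omega_{\varepsilon_1\varepsilon_2}\omega_0^{k_2-1}\cdots\omega_{\varepsilon_{r-1}\varepsilon_r}\omega_0^{k_r-1}$. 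The sums over $|\boldsymbol{k}|_{a-i+1}=\cdots$, $|\boldsymbol{l}|_{i+1}=\cdots$ with fixed sign patterns are exactly the generating-function coefficients obtained by expanding products of the form $\omega_{\pm1}(\omega_0+x\,\omega_{\pm1})^{*}$. We would therefore encode the whole family (over $a,b$) as the coefficient extraction from one iterated integral of the shape
\[
\int_0^z \omega_{\delta_1}\,(\text{a word in }\omega_0,\omega_\tau)\,\omega_{\delta_1\delta_2}\cdots
\]
whose sign pattern $\delta_1,\delta_1\tau,\dots,\delta_1\tau^{a-i},\delta_1\delta_2\tau^{a-i},\dots$ in the $\varepsilon$-variables corresponds, via the second integral representation, to letters $\omega_{\delta_1},\omega_\tau,\dots,\omega_\tau,\omega_{\delta_2},\omega_\tau,\dots$.

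First, we establish a mod-$p$ bridge. The coefficient of $z^p$ in $\mathrm{Li}(\boldsymbol{k};\boldsymbol{\varepsilon};z)$ is $\frac{1+\varepsilon_r(-1)^p}{p^{k_r}}M_p(\boldsymbol{k}_-;\boldsymbol{\varepsilon}_-)$. To avoid the $p^{-k_r}$ we instead work with the polynomial truncation $\sum_{m_r<p}$, or use the standard trick that the coefficient of $z^{p}$ in $\int_0^z(\cdots)\,\omega_{\varepsilon}$ with last form $\omega_{\pm1}$ (not $\omega_0$) gives $M_p$ directly. Concretely, we consider
\[
F(z)=\int_0^z \eta_1\cdots\eta_n\,\frac{2\,du}{1-u^2}\ \text{(or }\tfrac{2u\,du}{1-u^2}),
\]
so that the coefficient of $z^{p}$ (or $z^{p-1}$) equals an FMMV sum mod $p$ up to the factor $(1\pm(-1)^p)$, which is $2$ or $0$ for odd $p$.

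Second, we perform the change of variables $u\mapsto (z-u)/(1-zu)$ or its level-two analogue $u\mapsto -u$ together with the reversal $t\mapsto z-t$, combined with the reversal relation \eqref{Eq:reversal relation}. Under $u\mapsto -u$, $\omega_1$ is fixed and $\omega_{-1}$ changes sign. Under path reversal, word order flips. The two sums on the left arise from splitting a product of two generating integrals, one beginning with $\delta_1$ and one with $\delta_2$, using the shuffle product and the identity
\[
\int_0^z A\,\int_0^z B=\sum(\text{shuffles}).
\]
The binomials $\binom{i}{m}\binom{j}{n}$ come from expanding $(1+x)^i(1+y)^j$, where $x,y$ mark letters of the two types. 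The symmetry $m\leftrightarrow a-m$, $n\leftrightarrow b-n$ in the second sum reflects the reversal, and the right side with reversed sign tail $-\delta_2\tau^m,\dots,-\delta_2$ is the image of a single reversed integral.

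Third, we extract coefficients in $x,y$ and in $z^p$, reduce mod $p$, and use $\binom{p}{k}\equiv 0$ and $(-1)^{p}=-1$ to kill boundary terms.

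The main obstacle is the combinatorial bookkeeping. One must verify that the generating word produces exactly the stated sign sequences, including the shift from $\tau^{a-i}$ to $\tau^a$ in the $\boldsymbol{l}$-part, and that the mod-$p$ vanishing of the cross terms in the shuffle holds. We would first check the case $a=b=0$, where the statement should reduce to $M_\mathcal{A}(1,1;\delta_1,\delta_1\delta_2)+M_\mathcal{A}(1,1;\delta_2,\delta_1\delta_2)=-M_\mathcal{A}(1,1;\delta_1,-\delta_2)$, i.e. a stuffle/shuffle consequence, and use it to fix the normalization before running the general argument.
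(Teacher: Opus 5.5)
\textbf{There is a genuine gap: nothing in the proposal produces the right-hand side.}

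You have the left side essentially right. It is the shuffle expansion of a product of two generating integrals, one beginning with $\omega_{\delta_1}$ and one with $\omega_{\delta_2}$. The argument breaks where you describe the right side as the image of a single reversed integral.

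What produces the right side is a mod-$p$ lemma about the \emph{truncated product} of two polylogarithms. Set $\mathcal{C}_p\sum_n a_nz^n:=\sum_{n<p}a_n$. Then
\[
\mathcal{C}_p\bigl(\mathrm{Mi}(\boldsymbol{k};\boldsymbol{\varepsilon};z)\,\mathrm{Mi}(\boldsymbol{l};\boldsymbol{\delta};z)\bigr)\equiv(-1)^{\wt(\boldsymbol{l})}M_p(\boldsymbol{k},\overleftarrow{\boldsymbol{l}};\boldsymbol{\varepsilon},-\overleftarrow{\boldsymbol{\delta}})\pmod p .
\]
To prove it, substitute $n_j\mapsto p-n_j$ in the region $m_r+n_s<p$, which becomes the nested range $m_r<p-n_s<\cdots<p-n_1<p$, and use $(-1)^{p-n}=-(-1)^n$. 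This is exactly where the reversed, negated tail $-\delta_2\tau^m,\ldots,-\delta_2$ and the sign $(-1)^{m+n+1}=(-1)^{\wt(\boldsymbol{l})}$ come from.

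The tools you propose cannot replace this lemma:
\begin{itemize}
\item The substitutions $u\mapsto -u$, $u\mapsto (z-u)/(1-zu)$ and path reversal are identities of analytic iterated integrals. They never see $p$, so they cannot turn a product into a single finite value.
\item The reversal relation \eqref{Eq:reversal relation} concerns one full FMMV, not a truncated product of two series.
\item Of your two mod-$p$ bridges, only the truncation $\sum_{m_r<p}$ is viable. The coefficient $[z^p]\mathrm{Mi}$ carries a factor $p^{-k_r}$, which is still $1/p$ when the last letter is $\omega_{\pm1}$. For a product, $[z^p]$ sees only the terms with $m_r+n_s=p$, not the full range.
\item The congruence $\binom{p}{k}\equiv 0$ plays no role, and $(-1)^p=-1$ is used to flip signs, not to kill boundary terms.
\end{itemize}

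Second, you never write down the generating object, and you attribute the $m\leftrightarrow a-m$ symmetry to reversal, which is not where it comes from. The paper takes
\[
F(z)=\frac{1}{a!\,b!}\iint_{0<u_1,u_2<z}\bigl(\alpha_1\Omega_\tau(u_1,z)+\alpha_2\Omega_\tau(u_2,z)\bigr)^a\bigl(\beta_1\Omega_0(u_1,z)+\beta_2\Omega_0(u_2,z)\bigr)^b\,\omega_{\delta_1}(u_1)\,\omega_{\delta_2}(u_2).
\]
Expanding binomially writes $F$ as a sum of products $\mathrm{Mi}\cdot\mathrm{Mi}$, to which the lemma above applies; this gives the right side.

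Alternatively, split the domain into $u_1<u_2$ and $u_2<u_1$. On the first region use
\[
\alpha_1\Omega_\tau(u_1,z)+\alpha_2\Omega_\tau(u_2,z)=\alpha_1\Omega_\tau(u_1,u_2)+(\alpha_1+\alpha_2)\Omega_\tau(u_2,z),
\]
and similarly for $\Omega_0$; on the other region the roles of $\alpha_1,\alpha_2$ (and $\beta_1,\beta_2$) are swapped. This writes $F$ as single iterated integrals with the stated sign patterns, giving the left side.

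Extracting the coefficient of $\alpha_1^{a-m}\alpha_2^m\beta_1^{b-n}\beta_2^n$ then yields $\binom{i}{m}\binom{j}{n}$ from the region $u_1<u_2$ and $\binom{i}{a-m}\binom{j}{b-n}$ from $u_2<u_1$. No reversal is involved in that symmetry. Your $a=b=0$ check is correct and is the simplest instance of this scheme.
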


This theorem is proved by using the iterated integral expression for multiple polylogarithms. The same method has been used to give various weighted sum formulas for classical MZVs and FMZVs (see \cite{GuoXie2009, OngEieLiaw, Kamano2018}).

Among the applications of Theorem \ref{Thm:main result}, an important one is a one-parameter weighted sum formula for finite multiple $T$-values of arbitrary even depth. For an index $\boldsymbol{k}=(k_1,\ldots,k_r)$, define
\begin{align*}
\mathcal{W}_d(\boldsymbol{k}):=k_{r-d+1}+\cdots+k_r,\qquad 1\leq d\leq r,
\end{align*}
which is the sum of the last $d$ components of $\boldsymbol{k}$. We then obtain the following result.

\begin{thm}\label{thm:wtsum-z-T}
Let $z$ be a parameter. For $w\geq r\geq2$ with $r$ even, we have
\begin{align}
\sum_{|\boldsymbol{k}|_r=w}\sum_{d=1}^{r-1}\left[(1+z)^{\mathcal{W}_d(\boldsymbol{k})-1}
\bigl(1+z^{w-\mathcal{W}_d(\boldsymbol{k})-1}\bigr)+(-z)^{\mathcal{W}_d(\boldsymbol{k})-1}\right]
T_\mathcal{A}(\boldsymbol{k})=0.\label{eq:wtsum-T}
\end{align}
\end{thm}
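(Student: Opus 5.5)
We derive Theorem \ref{thm:wtsum-z-T} from Theorem \ref{Thm:main result}. We specialize the signs so that every FMMV in the main theorem becomes an FMTV, then take a generating function in $m,n$ to produce the powers of $(1+z)$ and $z$.

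\textbf{Step 1: choice of signs.} The $T$-pattern is $(-1,1,-1,\dots)$, so we take $\tau=-1$ and $\delta_1=-1$. In the first family the sign string is $\delta_1\tau^{s}$ for $s=0,\dots,a-i$, followed by $\delta_1\delta_2\tau^{a-i},\dots,\delta_1\delta_2\tau^{a}$. The junction index $a-i$ then repeats: the entry $\delta_1\tau^{a-i}$ is immediately followed by $\delta_1\delta_2\tau^{a-i}$. This string alternates only if $\delta_2=-1$. But then the junction entries agree rather than alternate. So the parameters must be read so that the strings alternate, with total depth $r=a+2$. I will recheck the exact position of each sign in the string as written and choose $\delta_2$ accordingly; I expect $\delta_2=1$, possibly after applying the reversal relation \eqref{Eq:reversal relation}.

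\textbf{Step 2: reversal.} The second family starts with $\delta_2$. The right-hand side ends with $-\delta_2\tau^m,\dots,-\delta_2$, which reads backwards relative to the $T$-pattern. I will apply \eqref{Eq:reversal relation}. It sends $M_{\mathcal A}(\boldsymbol k;\boldsymbol\varepsilon)$ to $(-1)^{\wt}M_{\mathcal A}(\overleftarrow{\boldsymbol k};-\overleftarrow{\boldsymbol\varepsilon})$. Since the depth $r$ is even, the negated reversal of an alternating string starting with $-1$ again starts with $-1$. Consequently every term becomes $\pm T_{\mathcal A}$ of some index, and evenness of $r$ is exactly what makes this consistent.

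\textbf{Step 3: regrouping by $d$.} After Steps 1 and 2, the inner sums run over indices $(\boldsymbol k,\boldsymbol l)$ with $\dep(\boldsymbol l)=i+1=:d$ and $\wt(\boldsymbol l)=i+j+1=\mathcal W_d$. I will write the left side as $\sum_{\boldsymbol k}\sum_{d}c_{m,n}(d,\mathcal W_d)\,T_{\mathcal A}(\boldsymbol k)$. Here $c$ is built from $\binom{i}{m}\binom{j}{n}$ and $\binom{i}{a-m}\binom{j}{b-n}$, and the right side contributes a single $d=m+1$ term with sign $(-1)^{m+n+1}$. Next I multiply by $z^{m+n}$, with suitable exponents, and sum over $0\le m\le a$, $0\le n\le b$. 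The binomial theorem turns $\sum_m\binom{i}{m}z^m\sum_n\binom{j}{n}z^n$ into $(1+z)^{i+j}=(1+z)^{\mathcal W_d-1}$. The complementary binomials give $z^{a+b}(1+z^{-1})^{i+j}$, which produces the factor $(1+z)^{\mathcal W_d-1}z^{w-\mathcal W_d-1}$ after identifying $a+b$ with $w-2$. The right side gives $(-z)^{\mathcal W_d-1}$. Finally, summing over all $a$ with $a+b$ fixed, i.e. over all depth splittings, collects all $d=1,\dots,r-1$.

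\textbf{Main obstacle.} The hardest part is the bookkeeping in Steps 1 and 2: the signs $\delta_1,\delta_2$, the reversal, and the depth/weight conventions must line up so that every string is exactly the $T$-pattern, and the exponents must match $w-\mathcal W_d-1$. I will first test the case $r=2$ (so $a=0$ and $d=1$) directly against the main theorem to pin down these conventions before doing the general generating-function computation.
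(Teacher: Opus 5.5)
Your overall plan (specialize Theorem~\ref{Thm:main result} so that every term is an FMTV, then multiply by $z^{m+n}$ and sum) is the paper's, and your binomial computation in Step~3 is right. But the sign choice, which is the step that carries the argument, is left open, and the value you predict fails.

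With $\delta_1=\tau=-1$ and $\delta_2=-1$, the junction entry $\delta_1\delta_2\tau^{a-i}=-\delta_1\tau^{a-i}$ \emph{differs} from its predecessor, so the string does alternate. Your assertion that the junction entries then agree is a slip. With $\delta_2=1$ instead, the junction entries coincide and the second family starts with $+1$, so no term is a $T$-value. The reversal relation cannot repair this: $\boldsymbol\varepsilon\mapsto-\overleftarrow{\boldsymbol\varepsilon}$ is an involution fixing the even-length $T$-pattern, so it sends non-$T$ strings to non-$T$ strings.

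The correct specialization is $(\delta_1,\delta_2,\tau)=(-1,-1,-1)$ with $a=r-2$, $b=w-r$. Then both left-hand strings carry $(-1)^p$ in position $p$, for any $a$. The right-hand tail $-\delta_2\tau^{m},\ldots,-\delta_2$ carries $(-1)^{a-p}$ in position $p$, which equals $(-1)^p$ precisely because $a=r-2$ is even. That is where the evenness of $r$ is used, and no reversal is needed. The right-hand side is directly $(-1)^{m+n+1}$ times the sum of $T_\mathcal{A}(\boldsymbol k)$ over $|\boldsymbol k|_r=w$ with $\mathcal W_{m+1}(\boldsymbol k)=m+n+1$. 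If you reversed it anyway, $(m,n)$ would become $(r-2-m,w-r-n)$, and the generating function would produce $(-1)^{\mathcal W_d-1}z^{w-1-\mathcal W_d}$ instead of $(-z)^{\mathcal W_d-1}$. That is an equivalent identity, but not the stated one.

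Step~3 also contains a spurious step: there is no sum over $a$. Every index in Theorem~\ref{Thm:main result} has depth $a+2$ and weight $a+b+2$, so $a$ and $b$ are fixed by $(w,r)$. Summing over $a$ with $a+b$ fixed would mix different depths.

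The range $d=1,\ldots,r-1$ comes from the sum over $0\le i\le r-2$ already present in the theorem, via $d=i+1$ and $\mathcal W_d(\boldsymbol k)=i+j+1$. For fixed $\boldsymbol k$ and $d$, exactly one $j\in[0,w-r]$ occurs, namely $j=\mathcal W_d(\boldsymbol k)-d$. So the double sum over $(i,j)$ is exactly $\sum_{|\boldsymbol k|_r=w}\sum_{d=1}^{r-1}$.

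With these corrections, sum over $0\le m\le r-2$ and $0\le n\le w-r$; the binomial sums are complete because $i\le r-2$ and $j\le w-r$. This gives the theorem as in the paper.
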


Theorem \ref{Thm:main result} also yields several further applications. By specializing Theorem \ref{Thm:main result} to depth two, we obtain a general weighted sum formula for depth-two FMMVs, from which corresponding weighted sum formulas for finite multiple $E$-, $S$-, $t$-, and $T$-values are obtained by appropriate choices of $\delta_1$ and $\delta_2$. Moreover, the height-zero specialization gives explicit relations for FMMVs with certain two-block sign patterns, while the height-one specialization yields further weighted sum relations for finite multiple $t$- and $T$-values.

The paper is organized as follows. We prove Theorem \ref{Thm:main result} in Section \ref{Sec:Proof}. In Section \ref{Sec:Applications}, we discuss several applications of our main result. More precisely, Subsection \ref{subsec:wtsum T-values} is devoted to the proof of Theorem \ref{thm:wtsum-z-T} and some of its corollaries; Subsection \ref{subsec:depth-two} treats the depth-two case; and Subsection \ref{subsec:height-zero and one} is concerned with the height-zero and height-one cases.


\section{Proof of Theorem \ref{Thm:main result}}\label{Sec:Proof}
For any index $\boldsymbol{k}=(k_1,\ldots,k_r)$ and any finite sequence $\boldsymbol{\varepsilon}=(\varepsilon_1,\ldots,\varepsilon_r)\in\{\pm 1\}^r$, we define the multiple polylogarithm by
\begin{align*}
\mathrm{Mi}(\boldsymbol{k};\boldsymbol{\varepsilon};z):=\sum\limits_{0<m_1<\dots< m_r}\frac{(1+\varepsilon_1(-1)^{m_1})\cdots(1+\varepsilon_r(-1)^{m_r})z^{m_r}}{m_1^{k_1}\cdots m_r^{k_r}}.
\end{align*}

For any prime $p$, as defined in \cite{Kamano2018}, the $\mathbb{Q}$-linear operator $\mathcal{C}_p:\mathbb{Q}[[z]]\to\mathbb{Q}$ is given by
\begin{align*}
  \mathcal{C}_p\sum\limits_{n=0}^\infty a_nz^n:=\sum\limits_{n=0}^{p-1}a_n.
\end{align*}

\begin{lem}\label{Lem:stuffle-relation}
For indices $\boldsymbol{k}=(k_1,\ldots,k_r),\boldsymbol{l}=(l_1,\ldots,l_s)$, and any finite sequences $\boldsymbol{\varepsilon}=(\varepsilon_1,\ldots,\varepsilon_r)\in\{\pm 1\}^r$, $\boldsymbol{\delta}=(\delta_1,\ldots,\delta_s)\in\{\pm 1\}^s$, we have
\begin{align}
\mathcal{C}_p\left(\mathrm{Mi}(\boldsymbol{k};\boldsymbol{\varepsilon};z)\mathrm{Mi}(\boldsymbol{l};\boldsymbol{\delta};z)\right)\equiv(-1)^{\wt(\boldsymbol{l})}M_p(\boldsymbol{k},\overleftarrow{\boldsymbol{l}};\boldsymbol{\varepsilon},-\overleftarrow{\boldsymbol{\delta}})\pmod p.\label{Eq:stuffle-relation}
\end{align}
\end{lem}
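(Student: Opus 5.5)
We prove Lemma \ref{Lem:stuffle-relation} by expanding both polylogarithms as power series, applying $\mathcal{C}_p$ coefficientwise, and substituting $m\mapsto p-m$ in the sum coming from $\mathrm{Mi}(\boldsymbol{l};\boldsymbol{\delta};z)$. Write
\[
\mathrm{Mi}(\boldsymbol{k};\boldsymbol{\varepsilon};z)=\sum_{n\ge1}A_n z^n,\qquad \mathrm{Mi}(\boldsymbol{l};\boldsymbol{\delta};z)=\sum_{n\ge1}B_n z^n,
\]
where $A_n$ is the sum over $0<m_1<\cdots<m_r=n$ of $\prod_i(1+\varepsilon_i(-1)^{m_i})/m_i^{k_i}$, and $B_n$ is defined analogously. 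Then
\[
\mathcal{C}_p(\text{product})=\sum_{N=2}^{p-1}\sum_{u+v=N}A_uB_v=\sum_{u,v\ge1,\;u+v<p}A_uB_v .
\]

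In the $B$-part, with variables $0<n_1<\cdots<n_s=v$, substitute $n_j=p-q_j$. The variables $q_j$ then decrease: $p-v=q_s<q_{s-1}<\cdots<q_1<p$. The condition $u+v<p$ becomes $u<q_s$. This gives a single chain
\[
0<m_1<\cdots<m_r=u<q_s<\cdots<q_1<p,
\]
with all denominators nonzero mod $p$. The total is therefore a sum over chains of length $r+s$ in $(0,p)$, with the last $r$-part variable equal to $u$, which is exactly the structure of $M_p$.

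Next we track each factor mod $p$.
\begin{itemize}
\item Since $1/n_j^{l_j}=1/(p-q_j)^{l_j}\equiv(-1)^{l_j}/q_j^{l_j}$, the denominators contribute $(-1)^{\wt(\boldsymbol{l})}$.
\item Since $p$ is odd (the case $p=2$ is irrelevant in $\mathcal{A}$), $(-1)^{n_j}=-(-1)^{q_j}$. Hence $1+\delta_j(-1)^{n_j}=1+(-\delta_j)(-1)^{q_j}$, so each sign becomes $-\delta_j$.
\item Reading the $q$'s in increasing order gives $q_s<\cdots<q_1$, with exponents $l_s,\ldots,l_1$ and signs $-\delta_s,\ldots,-\delta_1$. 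These are the reversed index $\overleftarrow{\boldsymbol{l}}$ and the sign sequence $-\overleftarrow{\boldsymbol{\delta}}$.
\end{itemize}
Combining these gives $(-1)^{\wt(\boldsymbol{l})}M_p(\boldsymbol{k},\overleftarrow{\boldsymbol{l}};\boldsymbol{\varepsilon},-\overleftarrow{\boldsymbol{\delta}})$, as claimed.

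The only delicate point is the bookkeeping. The map $(u,v,\text{chains})\mapsto$ the combined chain is a bijection onto all chains $0<m_1<\cdots<m_r<q'_1<\cdots<q'_s<p$: we recover $u=m_r$ and $v=p-q'_1$, and the constraint $u+v<p$ is automatic. The congruences are legitimate because all quantities are $p$-integral once $p>\max$ of the variables, which holds since every variable lies in $(0,p)$.
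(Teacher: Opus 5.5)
Your proposal is correct and is essentially the paper's proof. Like the paper, you write $\mathcal{C}_p$ of the product as a sum over pairs of chains with $m_r+n_s<p$, substitute $n_j\mapsto p-n_j$, and then use $(p-n)^{l}\equiv(-1)^{l}n^{l}\pmod p$ together with $(-1)^{p-n}=-(-1)^{n}$ for odd $p$ to arrive at $(-1)^{\wt(\boldsymbol{l})}M_p(\boldsymbol{k},\overleftarrow{\boldsymbol{l}};\boldsymbol{\varepsilon},-\overleftarrow{\boldsymbol{\delta}})$; your Cauchy-product expansion and explicit chain bijection only spell out steps the paper leaves implicit.
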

\proof The left-hand side of \eqref{Eq:stuffle-relation} is equal to
\begin{align*}
  \sum\limits_{\substack{0<m_1<\cdots<m_r\\ 0<n_1<\cdots<n_s\\m_r+n_s<p}}\frac{(1+\varepsilon_1(-1)^{m_1})\cdots(1+\varepsilon_r(-1)^{m_r})(1+\delta_1(-1)^{n_1})\cdots(1+\delta_s(-1)^{n_s})}{m_1^{k_1}\cdots m_r^{k_r}n_1^{l_1}\cdots n_s^{l_s}}.
\end{align*}
By making the change of variables $n_j\to p-n_j$ for $1\leq j\leq s$, we have
\begin{align*}
  &\mathcal{C}_p\left(\mathrm{Mi}(\boldsymbol{k};\boldsymbol{\varepsilon};z)\mathrm{Mi}(\boldsymbol{l};\boldsymbol{\delta};z)\right)\\
  &=\sum_{\substack{0<m_1<\cdots<m_r\\0<p-n_1<\cdots<p-n_s\\m_r < n_s}}\frac{\prod_{i=1}^r\bigl(1+\varepsilon_i(-1)^{m_i}\bigr)\prod_{j=1}^s \bigl(1+\delta_j(-1)^{p-n_j}\bigr)}{\prod_{i=1}^r m_i^{k_i}\prod_{j=1}^s(p-n_j)^{l_j} }\\
  &\equiv \sum_{\substack{0<m_1<\cdots<m_r<n_s<\cdots<n_1<p}} \frac{\prod_{i=1}^r (1+\varepsilon_i(-1)^{m_i}) \prod_{j=1}^s (1-\delta_j(-1)^{n_j})}{(-1)^{l_1+\cdots+l_s} \prod_{i=1}^r m_i^{k_i} \prod_{j=1}^s n_j^{l_j}} \pmod p\\
  &=(-1)^{\wt(\boldsymbol{l})}M_p(\boldsymbol{k},\overleftarrow{\boldsymbol{l}};\boldsymbol{\varepsilon},-\overleftarrow{\boldsymbol{\delta}}),
\end{align*}
and we obtain \eqref{Eq:stuffle-relation}.
\qed

We define the following integrals:
\begin{align*}
&\Omega_0(u_1,u_2):=\int_{u_1}^{u_2}\omega_0(u)=\int_{u_1}^{u_2}\frac{du}{u},\\
&\Omega_1(u_1,u_2):=\int_{u_1}^{u_2}\omega_1(u)=\int_{u_1}^{u_2}\frac{2udu}{1-u^2},\\
&\Omega_{-1}(u_1,u_2):=\int_{u_1}^{u_2}\omega_{-1}(u)=\int_{u_1}^{u_2}\frac{2du}{1-u^2}.
\end{align*}
We may deduce the following lemma via a similar approach to that employed in \cite[Proposition 2.1]{Eie-Wei2008} and \cite[Lemma 2.2]{Kamano2018}.

\begin{lem}\label{Lem:shuffle-relation}
For a positive integer $n$, let $i_m, j_m$ be non-negative integers ($1 \leq m \leq n$) and $\tau_m, \delta_m \in \{\pm 1\}$. Then we have
\begin{align*}
&\frac{1}{i_1! \cdots i_n! j_1! \cdots j_n!}
\int\limits_{0 < u_1 < \cdots < u_n < z}
\Omega_{\tau_1}(u_1,u_2)^{i_1} \Omega_{\tau_2}(u_2,u_3)^{i_2} \cdots \Omega_{\tau_n}(u_n,z)^{i_n} \\
& \quad \Omega_0(u_1,u_2)^{j_1} \Omega_0(u_2,u_3)^{j_2} \cdots \Omega_0(u_n,z)^{j_n}\omega_{\delta_1}(u_1)\cdots\omega_{\delta_n}(u_n) \\
=&\sum_{\substack{|\boldsymbol{k}_m|_{i_m+1}= i_m + j_m + 1\\ \boldsymbol{\varepsilon}_m=\delta_1\cdots\delta_m\tau_1^{i_1}\cdots\tau_{m-1}^{i_{m-1}}(1,\tau_m,\ldots,\tau_m^{i_m})\\ m=1,\ldots,n}}
\mathrm{Mi}(\boldsymbol{k}_1, \dots, \boldsymbol{k}_n; \boldsymbol{\varepsilon}_1,\dots,\boldsymbol{\varepsilon}_n;z).
\end{align*}
\end{lem}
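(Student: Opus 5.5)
The plan is to reduce the left-hand side to a sum of plain iterated integrals of words in $\omega_0,\omega_{\pm1}$ on $[0,z]$, and then identify each such integral with a multiple polylogarithm $\mathrm{Mi}$, following \cite[Proposition 2.1]{Eie-Wei2008} and \cite[Lemma 2.2]{Kamano2018}.

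\textbf{Step 1 (powers as simplex integrals).} For $a<b$ and $\eta\in\{0,\pm1\}$ I will use
\[
\frac{\Omega_\eta(a,b)^i}{i!}=\int\limits_{a<t_1<\cdots<t_i<b}\omega_\eta(t_1)\cdots\omega_\eta(t_i),
\]
which holds because the integrand is symmetric, so the $i!$ orderings of the cube contribute equally. On each interval $(u_m,u_{m+1})$ (with $u_{n+1}=z$), the product $\frac{\Omega_{\tau_m}^{i_m}}{i_m!}\cdot\frac{\Omega_0^{j_m}}{j_m!}$ is then the shuffle product of $\omega_{\tau_m}^{i_m}$ and $\omega_0^{j_m}$. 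That is, it is the sum over all $\binom{i_m+j_m}{i_m}$ words $w_m$ containing $i_m$ letters $\omega_{\tau_m}$ and $j_m$ letters $\omega_0$, each integrated over the simplex $u_m<\cdots<u_{m+1}$.

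Inserting the outer forms $\omega_{\delta_m}(u_m)$ and using Fubini, the left-hand side becomes
\[
\sum_{w_1,\ldots,w_n}\int_0^z \omega_{\delta_1}w_1\,\omega_{\delta_2}w_2\cdots\omega_{\delta_n}w_n .
\]

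\textbf{Step 2 (iterated integrals as polylogarithms).} I will prove, by induction on the length of the word, that
\[
\int_0^z \omega_{\eta_1}\omega_0^{k_1-1}\cdots\omega_{\eta_r}\omega_0^{k_r-1}=\mathrm{Mi}(k_1,\ldots,k_r;\eta_1,\eta_1\eta_2,\ldots,\eta_1\cdots\eta_r;z)
\]
for all $k_r\ge1$. This is the $z$-analogue of the representation quoted from \cite{XuZhao2022}. The induction reduces to two observations about how the forms act on a power $u^{N}$:
\begin{itemize}
\item $\int_0^z u^{N}\omega_0=z^{N}/N$, which raises the exponent $k_r$ by one.
\item $u^N\omega_1(u)=2\sum_{k\ge1}u^{N+2k-1}\,du$ and $u^N\omega_{-1}(u)=2\sum_{k\ge0}u^{N+2k}\,du$. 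After integrating, the new summation index $M>N$ therefore has the same parity as $N$ for $\omega_1$ and the opposite parity for $\omega_{-1}$. This is exactly what the factor $(1+\varepsilon_r(-1)^{M})$ encodes, given the previous factor $(1+\varepsilon_{r-1}(-1)^{N})$, when $\varepsilon_r=\varepsilon_{r-1}\eta_r$.
\end{itemize}
The base case starts from $N=0$ (the constant $1$, which is even). This is where the relation $\varepsilon_1=\eta_1$ comes from.

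\textbf{Step 3 (bookkeeping).} Each word in block $m$ has the form
\[
\omega_{\delta_m}\omega_0^{a_0}\omega_{\tau_m}\omega_0^{a_1}\cdots\omega_{\tau_m}\omega_0^{a_{i_m}}
\quad\text{with}\quad a_0+\cdots+a_{i_m}=j_m .
\]
This word corresponds bijectively to an index $\boldsymbol{k}_m=(a_0+1,\ldots,a_{i_m}+1)$ of depth $i_m+1$ and weight $i_m+j_m+1$, which is exactly the summation range on the right-hand side.

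For the signs, the cumulative product of the non-$\omega_0$ letters up to the $s$-th letter of block $m$ is
\[
\delta_1\cdots\delta_m\,\tau_1^{i_1}\cdots\tau_{m-1}^{i_{m-1}}\,\tau_m^{s}.
\]
This gives $\boldsymbol{\varepsilon}_m$ as stated.

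The main obstacle is Step 2: getting the parity and sign bookkeeping right in the induction, including checking that a word ending in $\omega_{\tau}$ (so $k_r=1$) causes no trouble. This works because $\mathrm{Mi}$ is evaluated at $z$ rather than at $1$, so no convergence issue arises at $u=1$. I would first verify the depth-one case directly and then run the induction on the last letter of the word.
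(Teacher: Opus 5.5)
Your proposal is correct and is essentially the argument the paper intends. The paper gives no separate proof and only defers to \cite[Proposition 2.1]{Eie-Wei2008} and \cite[Lemma 2.2]{Kamano2018}, whose method is exactly your route: write each $\Omega_\eta(u_m,u_{m+1})^{i}/i!$ as a simplex integral, shuffle within each block, and identify the resulting iterated integrals with $\mathrm{Mi}$ through the cumulative-sign rule (the $z$-version of the Xu--Zhao representation), with your parity analysis of $u^N\omega_{\pm1}$ and the sign bookkeeping $\delta_1\cdots\delta_m\tau_1^{i_1}\cdots\tau_{m-1}^{i_{m-1}}\tau_m^{s}$ both checking out.
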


Combining the integral and series representations of FMMVs, we prove the following theorem using the lemmas established above.
\begin{thm} 
Let $\alpha_1,\alpha_2,\beta_1,\beta_2$ be parameters. Then for any non-negative integers $a,b$ and any $\delta_1,\delta_2,\tau\in\{\pm 1\}$, we have
\begin{align}
&\sum\limits_{\substack{i_1+i_2=a\\j_1+j_2=b\\i_1,i_2,j_1,j_2\geq0}}\alpha_1^{i_1}(\alpha_1+\alpha_2)^{i_2}\beta_1^{j_1}(\beta_1+\beta_2)^{j_2}\notag\\
  &\qquad\times\sum_{\substack{|\boldsymbol{k}|_{i_1+1}=i_1+j_1+1\\|\boldsymbol{l}|_{i_2+1}=i_2+j_2+1}}M_\mathcal{A}(\boldsymbol{k},\boldsymbol{l}; \delta_1,\delta_1\tau,\ldots,\delta_1\tau^{i_1},\delta_1\delta_2\tau^{i_1},\delta_1\delta_2\tau^{i_1+1},\ldots,\delta_1\delta_2\tau^{i_1+i_2})\notag\\
&+\sum\limits_{\substack{i_1+i_2=a\\j_1+j_2=b\\i_1,i_2,j_1,j_2\geq0}}\alpha_2^{i_1}(\alpha_1+\alpha_2)^{i_2}\beta_2^{j_1}(\beta_1+\beta_2)^{j_2}\notag\\
&\qquad\times\sum_{\substack{|\boldsymbol{k}|_{i_1+1}=i_1+j_1+1\\|\boldsymbol{l}|_{i_2+1}=i_2+j_2+1}}M_\mathcal{A}(\boldsymbol{k},\boldsymbol{l}; \delta_2,\delta_2\tau,\ldots,\delta_2\tau^{i_1},\delta_1\delta_2\tau^{i_1},\delta_1\delta_2\tau^{i_1+1},\ldots,\delta_1\delta_2\tau^{i_1+i_2})\notag\\
=&\sum\limits_{\substack{i_1+i_2=a\\j_1+j_2=b\\i_1,i_2,j_1,j_2\geq0}}\alpha_1^{i_1}\alpha_2^{i_2}\beta_1^{j_1}\beta_2^{j_2}\notag\\
&\qquad\times\sum_{\substack{|\boldsymbol{k}|_{i_1+1}=i_1+j_1+1\\|\boldsymbol{l}|_{i_2+1}=i_2+j_2+1}}(-1)^{i_2+j_2+1}M_\mathcal{A}(\boldsymbol{k},\boldsymbol{l}; \delta_1,\delta_1\tau,\ldots,\delta_1\tau^{i_1},-\delta_2\tau^{i_2},-\delta_2\tau^{i_2-1},\ldots,-\delta_2).\label{Eq:wtsum}
\end{align}
\end{thm}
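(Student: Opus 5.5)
The plan is to follow Kamano's method. Consider the double iterated integral
\begin{align*}
I(z):=\int\limits_{0<u_1<u_2<z}\frac{1}{a!\,b!}\Bigl(\alpha_1\Omega_\tau(u_1,u_2)+(\alpha_1+\alpha_2)\Omega_\tau(u_2,z)\Bigr)^{a}\Bigl(\beta_1\Omega_0(u_1,u_2)+(\beta_1+\beta_2)\Omega_0(u_2,z)\Bigr)^{b}\,\omega_{\delta_1}(u_1)\,\omega_{\delta_1\delta_2}(u_2).
\end{align*}
I expand both powers binomially. Each monomial $\Omega_\tau(u_1,u_2)^{i_1}\Omega_\tau(u_2,z)^{i_2}\Omega_0(u_1,u_2)^{j_1}\Omega_0(u_2,z)^{j_2}/(i_1!i_2!j_1!j_2!)$ is then evaluated by Lemma \ref{Lem:shuffle-relation} with $n=2$, $\tau_1=\tau_2=\tau$ and second form $\omega_{\delta_1\delta_2}$. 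This gives $I(z)$ as the first weighted sum on the left of \eqref{Eq:wtsum}, but with $\mathrm{Mi}(\cdot;z)$ in place of $M_\mathcal{A}$. The sign pattern is $\delta_1(1,\ldots,\tau^{i_1})$ followed by $\delta_1\cdot\delta_1\delta_2\tau^{i_1}(1,\ldots,\tau^{i_2})=\delta_2\tau^{i_1}(\ldots)$. So I should choose the forms so that the second block's leading sign comes out as $\delta_1\delta_2\tau^{i_1}$, and I will adjust the second form to $\omega_{\delta_2}$ to achieve exactly that.

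The key decomposition uses additivity of the $\Omega$'s. Write $\Omega(u_1,z)=\Omega(u_1,u_2)+\Omega(u_2,z)$, and split the domain $0<u_1<u_2<z$ against its counterpart with $u_1,u_2$ swapped. Symmetrizing over the ordering of the two points, the sum of $I(z)$ and the analogous integral with roles $(\alpha_1,\beta_1,\delta_1)\leftrightarrow(\alpha_2,\beta_2,\delta_2)$ equals the integral over the full square $(0,z)^2$ of the symmetric integrand
\begin{align*}
\frac{1}{a!\,b!}\bigl(\alpha_1\Omega_\tau(u_1,z)+\alpha_2\Omega_\tau(u_2,z)\bigr)^a\bigl(\beta_1\Omega_0(u_1,z)+\beta_2\Omega_0(u_2,z)\bigr)^b\,\omega_{\delta_1}(u_1)\,\omega_{\delta_2}(u_2).
\end{align*}
On the region $u_1<u_2$ we have $\alpha_1\Omega(u_1,z)+\alpha_2\Omega(u_2,z)=\alpha_1\Omega(u_1,u_2)+(\alpha_1+\alpha_2)\Omega(u_2,z)$, which matches $I$. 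The mirrored region gives the second term. The square integral factors after binomial expansion into
\begin{align*}
\sum_{i_1+i_2=a,\ j_1+j_2=b}\alpha_1^{i_1}\alpha_2^{i_2}\beta_1^{j_1}\beta_2^{j_2}\,F_{i_1,j_1,\delta_1}(z)\,F_{i_2,j_2,\delta_2}(z),
\end{align*}
where each factor is a one-variable integral of the form $\frac{1}{i!j!}\int_0^z\Omega_\tau(u,z)^i\Omega_0(u,z)^j\omega_\delta(u)$. By Lemma \ref{Lem:shuffle-relation} with $n=1$, that integral equals $\sum\mathrm{Mi}(\boldsymbol{k};\delta(1,\tau,\ldots,\tau^{i});z)$ over $|\boldsymbol{k}|_{i+1}=i+j+1$.

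Finally I apply $\mathcal{C}_p$ and reduce mod $p$. On the left, the Mi-coefficients up to $z^{p-1}$ are exactly the partial sums $M_p$, because the sum over $m_r<p$ is cut off by $\mathcal{C}_p$. On the right, Lemma \ref{Lem:stuffle-relation} converts each product $\mathrm{Mi}(\boldsymbol{k};\boldsymbol{\varepsilon};z)\,\mathrm{Mi}(\boldsymbol{l};\boldsymbol{\delta};z)$ into $(-1)^{\wt(\boldsymbol{l})}M_p(\boldsymbol{k},\overleftarrow{\boldsymbol{l}};\boldsymbol{\varepsilon},-\overleftarrow{\boldsymbol{\delta}})$. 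Here $\wt(\boldsymbol{l})=i_2+j_2+1$, and the reversed negated sign block is $-\delta_2(\tau^{i_2},\ldots,1)$; relabelling $\overleftarrow{\boldsymbol{l}}$ as $\boldsymbol{l}$ (the summation set is reversal-invariant) gives exactly the right side of \eqref{Eq:wtsum}.

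The main obstacle is the careful bookkeeping of signs in Lemma \ref{Lem:shuffle-relation}. It must produce the block sign $\delta_1\delta_2\tau^{i_1}$ for both left-hand terms, and symmetrically $\delta_1\delta_2\tau^{i_1}$ after swapping $\delta_1\leftrightarrow\delta_2$. I will need to choose the second 1-form so that its cumulative product with the first, times $\tau^{i_1}$, gives this. I would first try $\omega_{\delta_1}(u_1)\omega_{\delta_2}(u_2)$ on the square, and then verify that the lemma's product rule yields $\delta_1\delta_2\tau^{i_1}$ in both orderings. A secondary technical check is that the symmetrization is legitimate: the diagonal $u_1=u_2$ has measure zero, and all series converge as formal power series in $z$, which is all that $\mathcal{C}_p$ needs.
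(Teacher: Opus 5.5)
Your proposal is correct and is essentially the paper's proof. The paper starts from the square integral $F(z)$ with forms $\omega_{\delta_1}(u_1)\omega_{\delta_2}(u_2)$. It factors this integral via the binomial theorem and Lemma \ref{Lem:shuffle-relation} with $n=1$, then applies $\mathcal{C}_p$ and Lemma \ref{Lem:stuffle-relation} to obtain the right-hand side; splitting the same integral into $u_1<u_2$ and $u_2<u_1$, using additivity of $\Omega$ and Lemma \ref{Lem:shuffle-relation} with $n=2$, gives the two left-hand sums. Your corrected choice $\omega_{\delta_2}$ for the second form is the right one (the initial $\omega_{\delta_1\delta_2}$ would give the wrong block sign $\delta_2\tau^{i_1}$), and with it Lemma \ref{Lem:shuffle-relation} immediately gives $\boldsymbol{\varepsilon}_2=\delta_1\delta_2\tau^{i_1}(1,\tau,\ldots,\tau^{i_2})$ in both orderings.
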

\proof
We consider the following function:
\begin{align*}
F(z)=\frac{1}{a!\,b!}\int\limits_{\substack{0 < u_1< z \\ 0 < u_2 < z}}\left( \alpha_1 \Omega_\tau(u_1,z) + \alpha_2 \Omega_\tau(u_2,z) \right)^a\left( \beta_1 \Omega_0(u_1,z) + \beta_2 \Omega_0(u_2,z) \right)^b\omega_{\delta_1}(u_1)\omega_{\delta_2}(u_2).
\end{align*}
By using the binomial expansion and Lemma \ref{Lem:shuffle-relation}, we get
\begin{align*}
&F(z)\\&=\sum\limits_{\substack{i_1+i_2=a\\j_1+j_2=b\\i_1,i_2,j_1,j_2\geq0}}\frac{\alpha_1^{i_1}\alpha_2^{i_2}\beta_1^{j_1}\beta_2^{j_2}}{i_1!\,i_2!\,j_1!\,j_2!}\int\limits_{\substack{0 < u_1< z \\ 0 < u_2 < z}}\Omega_\tau(u_1,z)^{i_1}\Omega_\tau(u_2,z)^{i_2}\Omega_0(u_1,z)^{j_1}\Omega_0(u_2,z)^{j_2}\omega_{\delta_1}(u_1)\omega_{\delta_2}(u_2)\\
&=\sum\limits_{\substack{i_1+i_2=a\\j_1+j_2=b\\i_1,i_2,j_1,j_2\geq0}}\alpha_1^{i_1}\alpha_2^{i_2}\beta_1^{j_1}\beta_2^{j_2}\sum_{\substack{|\boldsymbol{k}|_{i_1+1}=i_1+j_1+1\\|\boldsymbol{l}|_{i_2+1}= i_2+j_2+1}}
\mathrm{Mi}(\boldsymbol{k}; \delta_1,\delta_1\tau,\ldots,\delta_1\tau^{i_1};z)\mathrm{Mi}(\boldsymbol{l}; \delta_2,\delta_2\tau,\ldots,\delta_2\tau^{i_2};z).
\end{align*}
Applying $\mathcal{C}_p$ and using Lemma \ref{Lem:stuffle-relation}, we obtain
\begin{align*}
&(\mathcal{C}_pF(z))_p\\&=\sum\limits_{\substack{i_1+i_2=a\\j_1+j_2=b\\i_1,i_2,j_1,j_2\geq0}}\alpha_1^{i_1}\alpha_2^{i_2}\beta_1^{j_1}\beta_2^{j_2}\\
&\qquad\times\sum_{\substack{|\boldsymbol{k}|_{i_1+1}=i_1+j_1+1\\|\boldsymbol{l}|_{i_2+1}= i_2+j_2+1}}(-1)^{i_2+j_2+1}M_{\mathcal{A}}(\boldsymbol{k},\boldsymbol{l};\delta_1,\delta_1\tau,\ldots,\delta_1\tau^{i_1},-\delta_2\tau^{i_2},-\delta_2\tau^{i_2-1},\ldots,-\delta_2).
\end{align*}
 
On the other hand, by partitioning the region of integration, we can rewrite $F(z)$ as
\begin{align}
  F(z)= \frac{1}{a!\,b!}\left(\displaystyle\int_{0<u_1<u_2<z}+\displaystyle\int_{0<u_2<u_1<z}\right).\label{Eq:DividIntegral}
\end{align}
We denote the first integral in \eqref{Eq:DividIntegral} by $F_1(z)$ and the second by $F_2(z)$.
It is clear that 
\begin{align*}
\alpha_1 \Omega_\tau(u_1,z) + \alpha_2 \Omega_\tau(u_2,z)=\alpha_1 \Omega_\tau(u_1,u_2) + (\alpha_1+\alpha_2) \Omega_\tau(u_2,z),\\
\beta_1 \Omega_0(u_1,z) + \beta_2 \Omega_0(u_2,z)=\beta_1 \Omega_0(u_1,u_2) + (\beta_1+\beta_2) \Omega_0(u_2,z).
\end{align*}
Then we have 
\begin{align*}
  F_1(z)&=\sum\limits_{\substack{i_1+i_2=a\\j_1+j_2=b\\i_1,i_2,j_1,j_2\geq0}}\frac{\alpha_1^{i_1}(\alpha_1+\alpha_2)^{i_2}\beta_1^{j_1}(\beta_1+\beta_2)^{j_2}}{i_1!\,i_2!\,j_1!\,j_2!}\\
  &\qquad\times\int\limits_{\substack{0<u_1<u_2<z}}\Omega_\tau(u_1,u_2)^{i_1}\Omega_\tau(u_2,z)^{i_2}\Omega_0(u_1,u_2)^{j_1}\Omega_0(u_2,z)^{j_2}\omega_{\delta_1}(u_1)\omega_{\delta_2}(u_2)\\
  &=\sum\limits_{\substack{i_1+i_2=a\\j_1+j_2=b\\i_1,i_2,j_1,j_2\geq0}}\alpha_1^{i_1}(\alpha_1+\alpha_2)^{i_2}\beta_1^{j_1}(\beta_1+\beta_2)^{j_2}\\
  &\qquad\times\sum_{\substack{|\boldsymbol{k}|_{i_1+1}=i_1+j_1+1\\|\boldsymbol{l}|_{i_2+1}= i_2+j_2+1}}\mathrm{Mi}(\boldsymbol{k},\boldsymbol{l}; \delta_1,\delta_1\tau,\ldots,\delta_1\tau^{i_1},\delta_1\delta_2\tau^{i_1},\delta_1\delta_2\tau^{i_1+1},\ldots,\delta_1\delta_2\tau^{i_1+i_2};z).
\end{align*}
Applying $\mathcal{C}_p$, we obtain
\begin{align*}
  (\mathcal{C}_pF_1(z))_p&=\sum\limits_{\substack{i_1+i_2=a\\j_1+j_2=b\\i_1,i_2,j_1,j_2\geq0}}\alpha_1^{i_1}(\alpha_1+\alpha_2)^{i_2}\beta_1^{j_1}(\beta_1+\beta_2)^{j_2}\\
  &\quad\times\sum_{\substack{|\boldsymbol{k}|_{i_1+1}=i_1+j_1+1\\|\boldsymbol{l}|_{i_2+1}= i_2+j_2+1}}M_{\mathcal{A}}(\boldsymbol{k},\boldsymbol{l}; \delta_1,\delta_1\tau,\ldots,\delta_1\tau^{i_1},\delta_1\delta_2\tau^{i_1},\delta_1\delta_2\tau^{i_1+1},\ldots,\delta_1\delta_2\tau^{i_1+i_2}).
\end{align*}
Similarly, we have
\begin{align*}
(\mathcal{C}_pF_2(z))_p&=\sum\limits_{\substack{i_1+i_2=a\\j_1+j_2=b\\i_1,i_2,j_1,j_2\geq0}}\alpha_2^{i_1}(\alpha_1+\alpha_2)^{i_2}\beta_2^{j_1}(\beta_1+\beta_2)^{j_2}\\
&\quad\times\sum_{\substack{|\boldsymbol{k}|_{i_1+1}=i_1+j_1+1\\|\boldsymbol{l}|_{i_2+1}= i_2+j_2+1}}M_{\mathcal{A}}(\boldsymbol{k},\boldsymbol{l}; \delta_2,\delta_2\tau,\ldots,\delta_2\tau^{i_1},\delta_1\delta_2\tau^{i_1},\delta_1\delta_2\tau^{i_1+1},\ldots,\delta_1\delta_2\tau^{i_1+i_2}).
\end{align*}
This completes the proof.
\qed

\begin{proofof}{Theorem \ref{Thm:main result}}
By comparing the coefficients of $\alpha_1^{a-m}\alpha_2^m\beta_1^{b-n}\beta_2^n$ on both sides of \eqref{Eq:wtsum}, we get the desired result.
\end{proofof}

\section{Applications of Theorem \ref{Thm:main result}}\label{Sec:Applications}
\subsection{Weighted sum formulas for FMTVs}\label{subsec:wtsum T-values}
 We begin with the proof of Theorem \ref{thm:wtsum-z-T}. Let $w,r$ be positive integers with $w\ge r$. For $0\leq i\leq r-1$ and $0\leq j\leq w-r$, set
\begin{align*}
R_{w,r}(i,j):=\sum_{\substack{|\boldsymbol{k}|_r=w\\ \mathcal{W}_{i+1}(\boldsymbol{k})=i+j+1}}T_\mathcal{A}(\boldsymbol{k}).
\end{align*}
Note that $R_{w,r}(i,j)$ is the sum of all finite multiple $T$-values of weight $w$ and depth $r$ for which the last $i+1$ components in  the index $\boldsymbol{k}$ have total weight $i+j+1$. For each fixed $0\leq i\leq r-1$, we have
\begin{align*}
\sum\limits_{j=0}^{w-r}R_{w,r}(i,j)=\sum_{\substack{|\boldsymbol{k}|_r=w}}T_\mathcal{A}(\boldsymbol{k}).
\end{align*}
Moreover, if $r$ is even, then the reversal relation \eqref{Eq:reversal relation} gives
\begin{align*}
R_{w,r}(i,j)=(-1)^wR_{w,r}(r-2-i,w-r-j)
\end{align*} 
for $0\le i\le r-2$ and $0\le j\le w-r$.

\begin{proofof}{Theorem \ref{thm:wtsum-z-T}}
Taking $$a=r-2,\qquad b=w-r,\qquad
(\delta_1,\delta_2,\tau)=(-1,-1,-1)$$
in Theorem \ref{Thm:main result}, we obtain 
\begin{align}
\sum_{i=0}^{r-2}\sum_{j=0}^{w-r}\left\{\binom{i}{m}\binom{j}{n}+\binom{i}{r-2-m}\binom{j}{w-r-n}\right\}R_{w,r}(i,j)=(-1)^{m+n+1}R_{w,r}(m,n)\label{eq:wtsum-cor-T}
\end{align}
for all integers $m,n$ satisfying $0\leq m\leq r-2$ and $0\leq n\leq w-r$. Multiplying \eqref{eq:wtsum-cor-T} by $z^{m+n}$, summing over $0\leq m\leq r-2$ and $0\leq n\leq w-r$, and using 
\begin{align*}
\sum_{m=0}^{r-2}\binom{i}{m}z^m=(1+z)^i,\quad\sum_{m=0}^{r-2}\binom{i}{r-2-m}z^m=z^{r-2-i}(1+z)^i,
\end{align*}
together with the analogous identities for $n$, we have
\begin{align}
\sum_{i=0}^{r-2}\sum_{j=0}^{w-r}
\left[(1+z)^{i+j}+z^{w-2-i-j}(1+z)^{i+j}+(-z)^{i+j}\right]R_{w,r}(i,j)=0.\label{eq:wtsum-z-T}
\end{align}
Putting $d=i+1$, the defining condition of $R_{w,r}(i,j)$ gives $\mathcal{W}_d(\boldsymbol{k})=i+j+1$. Therefore, we get the desired result.
\end{proofof}

By taking special values of $z$, one may obtain further weighted sum formulas involving the partial weights $\mathcal{W}_d(\boldsymbol{k})$. Taking $z=1$ in Theorem \ref{thm:wtsum-z-T} yields the following weighted sum formula.

\begin{cor}\
Let $w\geq r\geq2$ with $r$ even. Then
\begin{align*}
\sum_{|\boldsymbol{k}|_r=w}\sum_{d=1}^{r-1}\left(2^{\mathcal{W}_d(\boldsymbol{k})}-(-1)^{\mathcal{W}_d(\boldsymbol{k})}\right)T_\mathcal{A}(\boldsymbol{k})=0.
\end{align*}
\end{cor}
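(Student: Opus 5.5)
The plan is to specialize Theorem \ref{thm:wtsum-z-T} at $z=1$ and simplify the bracketed coefficient; nothing beyond that theorem should be needed. Fix $w\geq r\geq2$ with $r$ even, and write $W=\mathcal{W}_d(\boldsymbol{k})$ for a given index $\boldsymbol{k}$ with $|\boldsymbol{k}|_r=w$ and $1\leq d\leq r-1$.

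First I will check that evaluating \eqref{eq:wtsum-T} at $z=1$ is legitimate. The identity is a polynomial identity in $z$. In the proof of Theorem \ref{thm:wtsum-z-T} the exponents appearing are $i+j$ and $w-2-i-j$ with $0\leq i\leq r-2$ and $0\leq j\leq w-r$. The second satisfies $w-2-i-j\geq w-2-(r-2)-(w-r)=0$. Equivalently, $W-1\geq0$ and $w-W-1\geq0$, because the first $r-d\geq1$ components of $\boldsymbol{k}$ have total weight at least $1$. So every term is an honest monomial in $z$ with a nonnegative exponent, and we may set $z=1$.

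At $z=1$ the coefficient of $T_\mathcal{A}(\boldsymbol{k})$ becomes
\begin{align*}
2^{W-1}\bigl(1+1^{w-W-1}\bigr)+(-1)^{W-1}=2^{W}-(-1)^{W}.
\end{align*}
Substituting this into \eqref{eq:wtsum-T} gives exactly the stated identity.

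There is no real obstacle. The only point needing care is confirming the exponents are nonnegative, so that $1^{w-W-1}=1$ is valid; this follows from the range of $(i,j)$ shown above.
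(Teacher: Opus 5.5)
Your proposal is correct and follows exactly the paper's route: the corollary is obtained by setting $z=1$ in Theorem~\ref{thm:wtsum-z-T}, and the simplification $2^{W-1}(1+1)+(-1)^{W-1}=2^{W}-(-1)^{W}$ is the only computation needed. Your check that the exponents are nonnegative is a harmless extra, since the identity is polynomial in $z$ and $1^{w-W-1}=1$ in any case.
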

For example, when $(w,r)=(3,2)$, we obtain
$$T_\mathcal{A}(2,1)+T_\mathcal{A}(1,2)=0.$$
For $(w,r)=(4,2)$, we have
$$T_\mathcal{A}(3,1)+T_\mathcal{A}(2,2)+3T_\mathcal{A}(1,3)=0.$$
For $(w,r)=(5,4)$, we have
\begin{align*}
5T_\mathcal{A}(2,1,1,1)+7T_\mathcal{A}(1,2,1,1)+9T_\mathcal{A}(1,1,2,1)+9T_\mathcal{A}(1,1,1,2)=0.
\end{align*}
 Combining the above identity with the reversal relation \eqref{Eq:reversal relation}, one can see that all FMTVs of weight five and depth four are rational multiples of $T_{\mathcal A}(2,1,1,1)$.

Taking $z=-1$ in Theorem \ref{thm:wtsum-z-T} yields the following result.
\begin{cor}
Let $w\geq r\geq2$ with $r$ even. Then
\begin{align}
(r-1)\sum_{|\boldsymbol{k}|_r=w}T_\mathcal{A}(\boldsymbol{k})=-\bigl(1+(-1)^w\bigr)\sum_{\substack{|\boldsymbol{k}|_r=w\\ k_r=1}}T_\mathcal{A}(\boldsymbol{k}).\label{Eq:z=-1}
\end{align}
\end{cor}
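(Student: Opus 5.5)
The plan is to specialize Theorem \ref{thm:wtsum-z-T} at $z=-1$. The left-hand side of \eqref{eq:wtsum-T} is a polynomial identity in $z$, so the substitution is legitimate. To avoid any ambiguity about $0^0$, I will work with the equivalent form \eqref{eq:wtsum-z-T}. There the weights are written as $(1+z)^{i+j}$, $z^{w-2-i-j}(1+z)^{i+j}$ and $(-z)^{i+j}$ with $i+j\ge 0$, and $\mathcal{W}_{i+1}(\boldsymbol{k})=i+j+1$.

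The first step is to evaluate the bracket at $z=-1$ for each pair $(i,j)$ with $0\le i\le r-2$, $0\le j\le w-r$.
\begin{itemize}
\item The term $(-z)^{i+j}$ becomes $1$.
\item The terms $(1+z)^{i+j}$ and $z^{w-2-i-j}(1+z)^{i+j}$ vanish unless $i+j=0$.
\item When $i=j=0$, the first equals $1$ and the second equals $(-1)^{w-2}=(-1)^w$. The exponent $w-2\ge 0$ is fine since $w\ge r\ge 2$.
\end{itemize}
So the bracket equals $1+\bigl(1+(-1)^w\bigr)[i=j=0]$, and \eqref{eq:wtsum-z-T} becomes
\begin{align*}
\sum_{i=0}^{r-2}\sum_{j=0}^{w-r}R_{w,r}(i,j)+\bigl(1+(-1)^w\bigr)R_{w,r}(0,0)=0 .
\end{align*}

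The second step is to identify each piece. By the identity $\sum_{j=0}^{w-r}R_{w,r}(i,j)=\sum_{|\boldsymbol{k}|_r=w}T_\mathcal{A}(\boldsymbol{k})$, valid for each fixed $i$, the double sum equals $(r-1)\sum_{|\boldsymbol{k}|_r=w}T_\mathcal{A}(\boldsymbol{k})$. By definition, $R_{w,r}(0,0)$ is the sum of $T_\mathcal{A}(\boldsymbol{k})$ over $|\boldsymbol{k}|_r=w$ with $\mathcal{W}_1(\boldsymbol{k})=k_r=1$. Moving this term to the right-hand side gives \eqref{Eq:z=-1}.

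Equivalently, in the language of Theorem \ref{thm:wtsum-z-T}: since $\mathcal{W}_d(\boldsymbol{k})\ge d$, the condition $\mathcal{W}_d(\boldsymbol{k})=1$ forces $d=1$ and $k_r=1$. There is no real obstacle here. The only point requiring care is the treatment of $(1+z)^{\mathcal{W}_d-1}$ at $z=-1$ when the exponent is zero, which is why I pass through the polynomial form \eqref{eq:wtsum-z-T}.
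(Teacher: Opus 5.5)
Your proposal is correct and follows the paper's own argument: you specialize the one-parameter formula at $z=-1$, where only the terms with $\mathcal{W}_d(\boldsymbol{k})=1$ (forcing $d=1$, $k_r=1$) survive in the $(1+z)$-part, and $(-z)^{\mathcal{W}_d(\boldsymbol{k})-1}=1$ contributes for every $d$, which produces the factor $r-1$. Passing through the $R_{w,r}(i,j)$ form and its row-sum identity is the same bookkeeping written out more explicitly, and it settles the $0^0$ convention cleanly.
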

\proof
 Setting $z=-1$ in \eqref{eq:wtsum-T}, the first term vanishes unless $\mathcal W_d(\boldsymbol{k})=1$, which occurs only when $d=1$ and $k_r=1$. Since $(-z)^{\mathcal W_d(\boldsymbol{k})-1}=1$, we obtain the desired result.
\qed

In particular, if $w$ is odd, then \eqref{Eq:z=-1} gives
\begin{align}
\sum_{|\boldsymbol{k}|_r=w}T_\mathcal{A}(\boldsymbol{k})=0\qquad(r\ \mathrm{even},\; w\ \mathrm{odd}).\label{eq:weight-odd sum}
\end{align}
One can see that \eqref{eq:weight-odd sum} also follows directly from the reversal relation \eqref{Eq:reversal relation}. If $w$ is even, \eqref{Eq:z=-1} gives a relation between the sum with fixed weight and depth and the sum over indices with $k_r=1$. For example, when $(w,r)=(6,2)$, we get
$$3T_\mathcal{A}(5,1)+T_\mathcal{A}(4,2)+T_\mathcal{A}(3,3)+T_\mathcal{A}(2,4)+T_\mathcal{A}(1,5)=0.$$
For $(w,r)=(5,4)$, we have
\begin{align*}
T_\mathcal{A}(2,1,1,1)+T_\mathcal{A}(1,2,1,1)+T_\mathcal{A}(1,1,2,1)+T_\mathcal{A}(1,1,1,2)=0.
\end{align*}

\subsection{Weighted sum formulas for depth-two FMMVs}\label{subsec:depth-two}
Let $a=0$ in Theorem \ref{Thm:main result}, we get the following weighted sum formula for depth-two FMMVs.
\begin{cor}\label{Cor:double-wtsum}
Let $b$ be a non-negative integer, and let $\delta_1,\delta_2\in\{\pm 1\}$. Then for any integer $n$ satisfying $0\leq n\leq b$, we have
  \begin{align}
  &\sum\limits_{j=0}^b\binom{j}{n}M_\mathcal{A}(b-j+1,j+1;\delta_1,\delta_1\delta_2)+\sum\limits_{j=0}^b\binom{j}{b-n}M_\mathcal{A}(b-j+1,j+1;\delta_2,\delta_1\delta_2)\notag\\
  &=(-1)^{n+1}M_\mathcal{A}(b-n+1,n+1;\delta_1,-\delta_2).\label{Eq:double-wtsum}
  \end{align}
\end{cor}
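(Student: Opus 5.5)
This is a direct specialization of Theorem \ref{Thm:main result} to $a=0$; the work is only in reading off the index sets and sign patterns correctly.

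With $a=0$, the hypothesis $0\le m\le a$ forces $m=0$, and the outer sums over $i$ collapse to $i=0$. The binomial factors then become $\binom{0}{0}=1$ in the first sum and $\binom{0}{a-m}=\binom{0}{0}=1$ in the second.

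Next I identify the inner index sets. For $i=0$ the condition $|\boldsymbol{k}|_{a-i+1}=a+b-i-j+1$ reads $|\boldsymbol{k}|_1=b-j+1$, so $\boldsymbol{k}=(b-j+1)$. The condition $|\boldsymbol{l}|_{1}=j+1$ gives $\boldsymbol{l}=(j+1)$. Each inner sum is therefore the single depth-two value $M_\mathcal{A}(b-j+1,j+1;\cdot)$.

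Then I read off the sign strings, which is the step needing care. In the first sum the pattern $\delta_1,\delta_1\tau,\ldots,\delta_1\tau^{a-i}$ has length $a-i+1=1$, so it is just $\delta_1$. The following block $\delta_1\delta_2\tau^{a-i},\ldots,\delta_1\delta_2\tau^{a}$ has length $i+1=1$ and equals $\delta_1\delta_2\tau^0=\delta_1\delta_2$. So $\tau$ drops out entirely, and the first term is
\[
\sum_{j=0}^b\binom{j}{n}M_\mathcal{A}(b-j+1,j+1;\delta_1,\delta_1\delta_2).
\]
The second sum similarly gives
\[
\sum_{j=0}^b\binom{j}{b-n}M_\mathcal{A}(b-j+1,j+1;\delta_2,\delta_1\delta_2).
\]

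For the right-hand side, with $m=0$ the index conditions give $\boldsymbol{k}=(b-n+1)$ and $\boldsymbol{l}=(n+1)$. The sign string is $\delta_1$ followed by the block $-\delta_2\tau^{m},\ldots,-\delta_2$, which has length $m+1=1$ and equals $-\delta_2$. The prefactor is $(-1)^{m+n+1}=(-1)^{n+1}$. This yields exactly \eqref{Eq:double-wtsum}, so nothing beyond Theorem \ref{Thm:main result} is required.
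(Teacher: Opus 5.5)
Your proposal is correct and follows the paper's proof exactly: the paper also obtains this corollary simply by setting $a=0$ (hence $m=i=0$) in Theorem \ref{Thm:main result}. Your bookkeeping of the collapsed binomials, the depth-one index sets, and the sign strings (including the disappearance of $\tau$ and the prefactor $(-1)^{n+1}$) is accurate.
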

By considering all possible combinations of $\delta_1$ and $\delta_2$ in \eqref{Eq:double-wtsum}, we get the following weighted sum formulas, where the integer $n$ satisfies $0\leq n\leq b$. Let $(\delta_1,\delta_2)=(1,1)$, we obtain 
\begin{align}
&\sum\limits_{j=0}^b\left(\binom{j}{n}+\binom{j}{b-n}\right)E_\mathcal{A}(b-j+1,j+1)=(-1)^{n+1}S_\mathcal{A}(b-n+1,n+1).\label{Eq:E-double-wtsum}
\end{align}
Let $(\delta_1,\delta_2)=(1,-1)$, we get 
\begin{align}
&\sum\limits_{j=0}^b\binom{j}{n}S_\mathcal{A}(b-j+1,j+1)+\sum\limits_{j=0}^b\binom{j}{b-n}t_\mathcal{A}(b-j+1,j+1)\notag\\
  &=(-1)^{n+1}E_\mathcal{A}(b-n+1,n+1).\label{Eq:St-double-wtsum-1}
\end{align}
Let $(\delta_1,\delta_2)=(-1,1)$, we have 
\begin{align}
 &\sum\limits_{j=0}^b\binom{j}{n}t_\mathcal{A}(b-j+1,j+1)+\sum\limits_{j=0}^b\binom{j}{b-n}S_\mathcal{A}(b-j+1,j+1)\notag\\
  &=(-1)^{n+1}t_\mathcal{A}(b-n+1,n+1).\label{Eq:St-double-wtsum-2}
\end{align}
Let $(\delta_1,\delta_2)=(-1,-1)$, we obtain 
\begin{align}
 &\sum\limits_{j=0}^b\left(\binom{j}{n}+\binom{j}{b-n}\right)T_\mathcal{A}(b-j+1,j+1)=(-1)^{n+1}T_\mathcal{A}(b-n+1,n+1).\label{Eq:T-double-wtsum}
\end{align}
Note that \eqref{Eq:St-double-wtsum-1}  and \eqref{Eq:St-double-wtsum-2} are equivalent by the reversal relation and \eqref{Eq:T-double-wtsum} is precisely the depth-two specialization of \eqref{eq:wtsum-cor-T}.

Moreover, setting $n=0$ in \eqref{Eq:E-double-wtsum}, \eqref{Eq:St-double-wtsum-1},
\eqref{Eq:St-double-wtsum-2} and \eqref{Eq:T-double-wtsum} yields the following sum formulas, respectively:
\begin{align*}
&\sum\limits_{j=0}^{b}E_\mathcal{A}(b-j+1,j+1)=-S_\mathcal{A}(b+1,1)-E_\mathcal{A}(1,b+1),\\
 &\sum\limits_{j=0}^{b}S_\mathcal{A}(b-j+1,j+1)=-E_\mathcal{A}(b+1,1)-t_\mathcal{A}(1,b+1),\\
 &\sum\limits_{j=0}^{b}t_\mathcal{A}(b-j+1,j+1)=-t_\mathcal{A}(b+1,1)-S_\mathcal{A}(1,b+1),\\
 &\sum\limits_{j=0}^{b}T_\mathcal{A}(b-j+1,j+1)=-T_\mathcal{A}(b+1,1)-T_\mathcal{A}(1,b+1).
\end{align*}

\subsection{Relations for height-zero and height-one FMMVs}\label{subsec:height-zero and one}
In Theorem \ref{Thm:main result}, the indices have depth $a+2$ and weight $a+b+2$. Thus $b=0$ corresponds to the height-zero case, while $b=1$ corresponds to the height-one case.

Let $b=0$ in Theorem \ref{Thm:main result}, we get a weighted sum formula for height-zero FMMVs.
\begin{cor}\label{Cor:height-zero}
Let $a$ be a non-negative integer, and let $\delta_1,\delta_2,\tau\in\{\pm 1\}$. Then for any integer $m$ satisfying $0\leq m\leq a$, we have
  \begin{align}
 &\sum\limits_{i=0}^a\binom{i}{m}M_\mathcal{A}(\{1\}^{a+2};\delta_1,\delta_1\tau,\ldots,\delta_1\tau^{a-i},\delta_1\delta_2\tau^{a-i},\ldots,\delta_1\delta_2\tau^{a-1},\delta_1\delta_2\tau^{a})\notag\\
  &+\sum\limits_{i=0}^a\binom{i}{a-m}M_\mathcal{A}(\{1\}^{a+2};\delta_2,\delta_2\tau,\ldots,\delta_2\tau^{a-i},\delta_1\delta_2\tau^{a-i},\ldots,\delta_1\delta_2\tau^{a-1},\delta_1\delta_2\tau^{a})\notag\\
  &=(-1)^{m+1}M_\mathcal{A}(\{1\}^{a+2};\delta_1,\delta_1\tau,\ldots,\delta_1\tau^{a-m},-\delta_2\tau^{m},-\delta_2\tau^{m-1},\ldots,-\delta_2\tau,-\delta_2).\label{Eq:height-zero}
  \end{align}
\end{cor}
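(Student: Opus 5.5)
This corollary is the specialization $b=0$ of Theorem \ref{Thm:main result}, so the plan is to substitute $b=0$ and simplify each sum. Fix $a\geq0$, $\delta_1,\delta_2,\tau\in\{\pm1\}$ and $0\leq m\leq a$. With $b=0$, the only admissible value of $n$ is $n=0$, and in the double sums over $0\leq i\leq a$, $0\leq j\leq b$ only $j=0$ survives. The binomial factors then become $\binom{i}{m}\binom{0}{0}=\binom{i}{m}$ in the first sum and $\binom{i}{a-m}\binom{0}{0-0}=\binom{i}{a-m}$ in the second, and the sign on the right-hand side becomes $(-1)^{m+0+1}=(-1)^{m+1}$.

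The next step is to determine the index sums. In the first two sums the conditions are $|\boldsymbol{k}|_{a-i+1}=a-i+1$ and $|\boldsymbol{l}|_{i+1}=i+1$. An index whose depth equals its weight has all components equal to $1$, so $\boldsymbol{k}=\{1\}^{a-i+1}$ and $\boldsymbol{l}=\{1\}^{i+1}$. Each inner sum therefore collapses to a single term $M_\mathcal{A}(\{1\}^{a+2};\cdots)$, with the sign pattern copied verbatim from Theorem \ref{Thm:main result}. On the right-hand side the same reasoning gives $\boldsymbol{k}=\{1\}^{a-m+1}$ and $\boldsymbol{l}=\{1\}^{m+1}$, so the concatenated index is again $\{1\}^{a+2}$, carrying the sign sequence $\delta_1,\ldots,\delta_1\tau^{a-m},-\delta_2\tau^{m},\ldots,-\delta_2$.

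Assembling these three pieces yields \eqref{Eq:height-zero}. There is no real obstacle here. The only point to check is that the sign sequences are unchanged by the specialization, which holds because they depend only on $i$ (respectively $m$) and $a$, not on $j$ or $b$.
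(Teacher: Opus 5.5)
Your proposal is correct and is the same approach as the paper, which obtains this corollary simply by setting $b=0$ in Theorem~\ref{Thm:main result}. Your checks are accurate: $b=0$ forces $n=j=0$, every depth-equals-weight index is $\{1\}^{\cdot}$, and the sign sequences depend only on $a$ and $i$ (respectively $m$).
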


Several useful specializations of \eqref{Eq:height-zero} are given below. Considering $(\delta_1,\delta_2,\tau)=(1,1,1)$ in \eqref{Eq:height-zero}, we obtain 
\begin{align*}
M_\mathcal{A}(\{1\}^{a+2};\{1\}^{a-m+1},\{-1\}^{m+1})=(-1)^{m+1}\binom{a+2}{m+1}E_\mathcal{A}(\{1\}^{a+2}),\quad (0\leq m\leq a).
\end{align*}
It follows from the reversal relation \eqref{Eq:reversal relation} that $E_\mathcal{A}(\{1\}^{a+2})=(-1)^a t_\mathcal{A}(\{1\}^{a+2})$. Hence, we have
\begin{align*}
M_\mathcal{A}(\{1\}^{a+2};\{1\}^{a-m+1},\{-1\}^{m+1})=(-1)^{a+m+1}\binom{a+2}{m+1}t_\mathcal{A}(\{1\}^{a+2}),\quad (0\leq m\leq a).
\end{align*}
Thus, these identities express a family of height-zero FMMVs with a two-block sign pattern in terms of the FMtV $t_{\mathcal A}(\{1\}^{a+2})$ or the FMEV $E_{\mathcal A}(\{1\}^{a+2})$.
For example, taking $a=1$ and $m=0$ in the above identity gives $$M_\mathcal{A}(1,1,1;1,1,-1)=3t_\mathcal{A}(1,1,1).$$

Taking $(\delta_1,\delta_2,\tau)=(-1,-1,-1)$ in \eqref{Eq:height-zero}, we obtain 
  \begin{align*}
  &M_\mathcal{A}(\{1\}^{a+2};-1,1,\ldots,(-1)^{a-m+1},(-1)^m,(-1)^{m-1},\ldots,1)\\&=(-1)^{m+1}\binom{a+2}{m+1}T_\mathcal{A}(\{1\}^{a+2}),\quad (0\leq m\leq a).
  \end{align*}
If $a$ is even, then setting $m=0$ in the above identity gives $T_\mathcal{A}(\{1\}^{a+2})=0$. Equivalently, 
\begin{align}
T_\mathcal{A}(\{1\}^{r})=0,\quad(r~\text{even}).\label{eq:height-zero odd depth}
\end{align}
Note that \eqref{eq:height-zero odd depth} was also obtained by Zhao \cite[Proposition 7]{Zhao2024a}. For odd $r$, Zhao further conjectured that \begin{align*}
T_\mathcal{A}(\{1\}^{r})=\frac{2^{r-1}-1}{2^{r-2}}Z(r),\quad(r~\text{odd}).
\end{align*}
Thus, all height-zero FMTVs vanish in even depth, whereas in odd depth they are conjecturally proportional to $Z(r)$.

In the height-one case, since the weight exceeds the depth by one, every index occurring in this case is of the form $(\{1\}^{j},2,\{1\}^{a-j+1})$. Taking $b=1$, $n=0$ and $(\delta_1,\delta_2,\tau)=(-1,1,1)$ in Theorem \ref{Thm:main result}, we obtain the following sum formula for height-one FMtVs.
\begin{cor}\label{Cor:t-height-one}
Let $a$ be a non-negative integer. Then for any integer $m$ satisfying $0\leq m\leq a$, we have
  \begin{align}
 &\binom{a+1}{m+1}\sum\limits_{j=0}^{a+1}t_\mathcal{A}(\{1\}^{j},2,\{1\}^{a-j+1})+(-1)^m\sum\limits_{j=0}^{a-m}t_\mathcal{A}(\{1\}^{j},2,\{1\}^{a-j+1})\notag\\&=-\sum\limits_{i=0}^a\sum\limits_{j=0}^i\binom{i}{a-m}M_\mathcal{A}(\{1\}^{a-j+1},2,\{1\}^{j};\{1\}^{a-i+1},\{-1\}^{i+1}).\label{Eq:t-height-one}
  \end{align}
\end{cor}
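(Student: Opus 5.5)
The plan is to specialize Theorem \ref{Thm:main result} to $b=1$, $n=0$ and $(\delta_1,\delta_2,\tau)=(-1,1,1)$, and then identify each of its three sums with the corresponding term of \eqref{Eq:t-height-one}. Since $b=1$, every index has weight one more than its depth $a+2$. So every index is $(\{1\}^{j},2,\{1\}^{a-j+1})$ for a unique $j$, and the only real work is tracking where the single $2$ sits and what the sign pattern is.

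\emph{First sum.} With $n=0$ the weight $\binom{j}{0}$ equals $1$, and $j$ ranges over $\{0,1\}$. The sign pattern is
\[
\delta_1,\delta_1\tau,\ldots,\delta_1\tau^{a-i},\delta_1\delta_2\tau^{a-i},\ldots,\delta_1\delta_2\tau^{a}.
\]
For the chosen parameters this is $\{-1\}^{a+2}$, so every term is a $t_\mathcal{A}$-value. Fix $i$.
\begin{itemize}
\item For $j=0$, $\boldsymbol{k}$ has depth $a-i+1$ and weight $a-i+2$, so it contains the single $2$, and $\boldsymbol{l}=\{1\}^{i+1}$.
\item For $j=1$, $\boldsymbol{k}$ is all ones and $\boldsymbol{l}$ contains the single $2$.
\end{itemize}
Together these cover each position of the $2$ exactly once. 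Hence the inner sum equals $\sum_{j=0}^{a+1}t_\mathcal{A}(\{1\}^{j},2,\{1\}^{a-j+1})$, independently of $i$. The hockey-stick identity $\sum_{i=0}^{a}\binom{i}{m}=\binom{a+1}{m+1}$ then gives the first term of \eqref{Eq:t-height-one}.

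\emph{Second sum.} The weight $\binom{j}{b-n}=\binom{j}{1}$ kills $j=0$, leaving only $j=1$. Then $\boldsymbol{k}=\{1\}^{a-i+1}$ and $\boldsymbol{l}$ has depth $i+1$ and weight $i+2$. The sign pattern becomes $\delta_2\tau^{\cdot}=1$ on the first $a-i+1$ slots and $\delta_1\delta_2\tau^{\cdot}=-1$ on the last $i+1$ slots, i.e. $(\{1\}^{a-i+1},\{-1\}^{i+1})$. Write the full index as $(\{1\}^{a-j+1},2,\{1\}^{j})$. The condition that the $2$ lies in $\boldsymbol{l}$, i.e. among the last $i+1$ slots, is exactly $0\le j\le i$. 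This yields the double sum $\sum_{i}\sum_{j\le i}\binom{i}{a-m}M_\mathcal{A}(\cdots)$.

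\emph{Right-hand side.} With $n=0$, $\boldsymbol{l}=\{1\}^{m+1}$, and $\boldsymbol{k}$ has depth $a-m+1$ and weight $a-m+2$. The signs are $\delta_1\tau^{\cdot}=-1$ followed by $-\delta_2\tau^{\cdot}=-1$, so every term is again a $t_\mathcal{A}$-value. The $2$ ranges over the first $a-m+1$ positions, giving
\[
(-1)^{m+1}\sum_{j=0}^{a-m}t_\mathcal{A}(\{1\}^{j},2,\{1\}^{a-j+1}).
\]

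Finally, move the second sum to the right-hand side and the right-hand side of Theorem \ref{Thm:main result} to the left, which introduces the factor $(-1)^m$. This gives \eqref{Eq:t-height-one}. The only delicate point is the bookkeeping of the second sum: the sign blocks and the admissible positions of the $2$ must be matched against the indexing $(\{1\}^{a-j+1},2,\{1\}^{j})$ used in the statement. I would check this step carefully, for instance against the case $a=0$, where \eqref{Eq:t-height-one} can be compared directly with the depth-two formula \eqref{Eq:St-double-wtsum-2}.
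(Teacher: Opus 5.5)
Your proposal is correct and follows the paper's own argument: the paper obtains Corollary \ref{Cor:t-height-one} by specializing Theorem \ref{Thm:main result} to $b=1$, $n=0$, $(\delta_1,\delta_2,\tau)=(-1,1,1)$. Your bookkeeping supplies the details the paper leaves implicit, namely the hockey-stick identity $\sum_{i=0}^{a}\binom{i}{m}=\binom{a+1}{m+1}$ for the first sum, the constraint $0\le j\le i$ on the position of the $2$ in the second sum, and the sign $(-1)^{m+1}$ on the right-hand side, which becomes $(-1)^m$ after rearranging.
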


Let $b=1$, $n=0$ and $(\delta_1,\delta_2,\tau)=(-1,-1,-1)$ in Theorem \ref{Thm:main result}, we get the following  weighted sum formula for height-one FMTVs. It is precisely the height-one specialization of \eqref{eq:wtsum-cor-T}. 
\begin{cor}\label{Cor:T-height-one}
Let $a$ be a non-negative even integer. Then for any integer $m$ satisfying $0\leq m\leq a$, we have
  \begin{align}
 &\sum\limits_{j=0}^{a+1}\left(\binom{a+2}{m+1}-\binom{a-j+1}{a-m+1}\right)T_\mathcal{A}(\{1\}^{j},2,\{1\}^{a-j+1})\notag\\&=(-1)^{m+1}\sum\limits_{j=0}^{a-m}T_\mathcal{A}(\{1\}^{j},2,\{1\}^{a-j+1}).\label{Eq:T-height-one}
  \end{align}
\end{cor}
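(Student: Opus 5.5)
We specialize Theorem \ref{Thm:main result} with $b=1$, $n=0$ and $(\delta_1,\delta_2,\tau)=(-1,-1,-1)$, then reorganize the sums by the position of the unique component $2$.

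First, we check that every sign sequence appearing is the $T$-pattern $(-1,1,-1,\ldots)$, i.e.\ the sign at position $t$ is $(-1)^t$.

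\begin{itemize}
\item In both sums on the left-hand side, the first $a-i+1$ signs are $\delta\tau^{t-1}=(-1)^t$.
\item The remaining left-hand signs are $\delta_1\delta_2\tau^{a-i+s}=(-1)^{a-i+s}$ at position $a-i+2+s$, which again equals $(-1)^{\text{position}}$.
\item On the right-hand side, the signs $-\delta_2\tau^{m-s}=(-1)^{m-s}$ sit at positions $a-m+2+s$. Since $(-1)^{m-s}=(-1)^{a-m+2+s}$ exactly when $(-1)^a=1$, this is where the hypothesis that $a$ is even enters.
\end{itemize}

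So, exactly as in the derivation of \eqref{eq:wtsum-cor-T}, with $r=a+2$ and $w=a+3$, we get
\begin{align*}
\sum_{i=0}^{a}\sum_{j=0}^{1}\left\{\binom{i}{m}\binom{j}{0}+\binom{i}{a-m}\binom{j}{1}\right\}R_{a+3,a+2}(i,j)=(-1)^{m+1}R_{a+3,a+2}(m,0).
\end{align*}

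Next we express $R(i,j)$ through $T_j:=T_\mathcal{A}(\{1\}^{j},2,\{1\}^{a-j+1})$, where $0\le j\le a+1$. The $2$ sits at position $j+1$, so it lies among the last $i+1$ components iff $j\ge a+1-i$. This gives
\begin{align*}
R(i,1)=\sum_{j=a+1-i}^{a+1}T_j,\qquad R(i,0)=\sum_{j=0}^{a-i}T_j.
\end{align*}
In particular, the right-hand side is $(-1)^{m+1}\sum_{j=0}^{a-m}T_j$, as claimed.

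On the left-hand side, the first term gives $\sum_{i=0}^a\binom{i}{m}\bigl(R(i,0)+R(i,1)\bigr)=\binom{a+1}{m+1}\sum_j T_j$, by the hockey-stick identity. The second term, after swapping the order of summation, gives coefficient $\sum_{i=a+1-j}^{a}\binom{i}{a-m}$ on $T_j$. This equals
\begin{align*}
\sum_{i=0}^{a}\binom{i}{a-m}-\sum_{i=0}^{a-j}\binom{i}{a-m}=\binom{a+1}{m}-\binom{a-j+1}{a-m+1}.
\end{align*}
Adding the two contributions and using Pascal's rule $\binom{a+1}{m+1}+\binom{a+1}{m}=\binom{a+2}{m+1}$ gives exactly the coefficient in \eqref{Eq:T-height-one}.

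The only delicate points are the sign-pattern verification, which is where evenness of $a$ is needed, and getting the summation ranges right in the interchange of sums. The rest is routine binomial bookkeeping.
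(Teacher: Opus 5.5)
Your proposal is correct and follows the paper's route: the paper obtains this corollary by setting $b=1$, $n=0$, $(\delta_1,\delta_2,\tau)=(-1,-1,-1)$ in Theorem \ref{Thm:main result}, i.e.\ as the height-one case of \eqref{eq:wtsum-cor-T}. Your sign-pattern check, which correctly shows that evenness of $a$ is used only on the right-hand side, and your binomial bookkeeping via the hockey-stick identity and Pascal's rule fill in details the paper leaves implicit.
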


For $b\ge2$, the situation becomes more complicated, since indices of different heights may occur simultaneously. Indeed, the indices in Theorem \ref{Thm:main result} satisfy
$$ \sum_{i=1}^{a+2}(k_i-1)=\wt(\boldsymbol{k})-\dep(\boldsymbol{k})=b. $$
For example, when \(b=2\), one encounters both height-one indices containing a single component \(3\) and height-two indices containing two components equal to \(2\). This suggests studying further specializations of Theorem \ref{Thm:main result} with fixed height, or more generally with a prescribed distribution of $b$ among the components, which may lead to relations for higher-height FMMVs.


\section*{Acknowledgments}
The first author is supported by the Natural Science Foundation of Shanghai (Grant No. 24ZR1469000).


\begin{thebibliography}{99}

\bibitem{Eie-Wei2008} M. Eie and C.-S. Wei, A short proof for the sum formula and its generalization, \textit{Arch. Math.} \textbf{91} (2008), 330-338.


\bibitem{GuoXie2009} L. Guo and B. Xie, Weighted sum formula for multiple zeta values, \textit{J. Number Theory} \textbf{129} (2009), 2747-2765. 

\bibitem{Hoffman1992} M. E. Hoffman, Multiple harmonic series, \textit{Pacific J. Math.} \textbf{152} (2) (1992), 275-290.

\bibitem{Hoffman2019} M. E. Hoffman, An odd variant of multiple zeta values, \textit{Comm. Number Theory Phys.} \textbf{13} (2019), 529-567.

\bibitem{Kaneko2019} M. Kaneko, An introduction to classical and finite multiple zeta values,
\textit{Publications math\'ematiques de Besan\c{c}on. Alg\`ebre et th\'eorie des nombres}, 2019/1, (2019), 103-129.

\bibitem{KanekoZagier-prep} M. Kaneko and D. Zagier, Finite multiple zeta values, in preparation.

\bibitem{Kaneko-Tsumura2020} M. Kaneko and H. Tsumura,  On a variant of multiple zeta values of level two, \textit{Tsukuba J. Math.} \textbf{44} (2) (2020), 213-234. 

\bibitem{Kamano2018} K. Kamano, Weighted sum formulas for finite multiple zeta values, \textit{J. Number Theory} \textbf{192} (2018), 168-180.

\bibitem{KanekoMurakamiYoshihara2023} M. Kaneko, T. Murakami and A. Yoshihara, On finite multiple zeta values of level two, \textit{Pure Appl. Math. Q.} \textbf{19} (1) (2023), 267-280.

\bibitem{LiWangZhang2026} Z. Li, Z. Wang and L. Zhang, A proof of a weighted sum conjecture for finite multiple zeta values of level two, arXiv: 2608.10675. 

\bibitem{OhnoZudilin} Y. Ohno and W. Zudilin, Zeta stars, \textit{Commun. Number Theory Phys.} \textbf{2} (2008), 325-347.

\bibitem{OngEieLiaw} Y. L. Ong, M. Eie and W.-C. Liaw, On generalizations of weighted sum formulas of multiple zeta values, \textit{Int. J. Number Theory} \textbf{9} (2013), 1185-1198.

\bibitem{SaitoWakabayashi} S. Saito and N. Wakabayashi, Sum formula for finite multiple zeta values, \textit{J. Math. Soc. Japan} \textbf{67} (3) (2015), 1069-1076.

\bibitem{XuZhao2022} C. Xu and J. Zhao, Variants of multiple zeta values with even and odd summation indices, \textit{Math. Z.} \textbf{300} (3) (2022), 3109-3142.

\bibitem{Zhao2016} J. Zhao, \textit{Multiple Zeta Functions, Multiple Polylogarithms and Their Special Values, Series on Number Theory and Its Applications}, Vol. 12, World Scientific Publishing Co. Pte. Ltd., Hackensack, NJ, 2016.

\bibitem{Zhao2024a} J. Zhao, Finite and symmetric Euler sums and finite and symmetric (alternating) multiple $T$-values, \textit{Axioms} \textbf{13} (2024), 210.

\bibitem{Zhao2024b} J. Zhao, Finite multiple mixed values, \textit{Foundations} \textbf{4} (2024), 451-467.


\bibitem{Zagier1994} D. Zagier, Values of zeta functions and their applications, in First European Congr. Math., vol. 2, {\em Progr. Math.} {\bf 120} (1994), 497-512.

\end{thebibliography}
\end{document}